\newcommand{\gl}{\lambda}
\newcommand{\mbe}{\mathbb{E}}
\newcommand{\mbf}{\mathbb{F}}
\newcommand{\mbg}{\mathbb{G}}
\newcommand{\comment}[1]{}
\newcommand{\gD}{\Delta}
\newcommand{\mbh}{\mathbb{H}}
\newcommand{\mbr}{\mathbb{R}}
\newcommand{\mcf}{\mathcal{F}}
\newcommand{\mce}{\mathcal{E}}
\newcommand{\mcg}{\mathcal{G}}
\newcommand{\mch}{\mathcal{H}}
\newcommand{\mct}{\mathcal{T}}
\newcommand{\mcc}{\mathcal{C}}
\newcommand{\ga}{\alpha}
\newcommand{\gs}{\sigma}
\newcommand{\gL}{\Lambda}
\newcommand{\gb}{\beta}
\newtheorem{theorem}{Theorem}
\newtheorem{definition}{Definition}
\newtheorem{lemma}{Lemma}
\newtheorem{proposition}{Proposition}
\newtheorem{remark}[theorem]{Remark}
\newcommand{\bee}{\begin{equation}}
\newcommand{\eee}{\end{equation}}
\newcommand{\bea}{\begin{eqnarray}}
\newcommand{\eea}{\end{eqnarray}}
\newcommand{\bean}{\begin{eqnarray*}}
\newcommand{\eean}{\end{eqnarray*}}
\newcommand{\gep}{\varepsilon}
\newcommand{\mctl}{\mct(\gL)}
\newcommand{\norm}{\parallel}
\newcommand{\rawi}{\rightarrow\infty}
\newcommand{\mcn}{\mathcal{N}}
\begin{document}

\title{ Strict Local Martingales With Jumps}
\author{ Philip Protter\thanks{
Statistics Department, Columbia University, New York, NY 10027; supported in
part by NSF grant DMS-1308483} \\
\\
\\
}
\date{\today }
\maketitle

\begin{abstract}

A strict local martingale is a local martingale which is not a martingale. There are few explicit examples of ``naturally occurring'' strict local martingales with jumps available in the literature. The purpose of this paper is to provide such examples, and to illustrate how they might arise via filtration shrinkage, a phenomenon we would contend is common in applications such as filtering, control, and especially in mathematical finance. We give a method for constructing such examples and analyze one particular method in detail.

\end{abstract}

\section{Introduction}\label{s1}

It has recently been remarked in the literature that there is a paucity of examples of strict local martingales with jumps (cf,~\cite{FJS},\cite{KKN}), aside from the case treated by Chybiryakov~\cite{OC}. Examples of strict local martingales with jumps are also given in~\cite{KKN}. And, as is remarked in~\cite{KKN}, if $M$ is a continuous strict local martingale, and $N$ is a purely discontinuous martingale, then the sum $L=M+N$ is again a strict local martingale, but this seems an artificial way of creating examples of strict local martingales with jumps, and the purpose here is to show how they arise naturally, to supplement the theory of~\cite{OC}. Aspects of the importance of strict local martingales arising in quite different theories is emphasized in the now classic papers of Elworthy, Li, and Yor~\cite{ELY}, Delbaen and Schachermayer~\cite{DS}, and more recently Madan and Yor~\cite{MY}.

The obvious candidate for a strict local martingale with jumps is that of a L\'evy process. However no L\'evy processes can be a strict local martingale (see~\cite{JacodS} or~\cite[Exercise 29, p. 49]{PP}). While of course strict local martingales with jumps can arise as stochastic integrals with respect to L\'evy processes, given a candidate integrand it is hard to determine if the stochastic integral that will result will be a strict local martingale, or just a martingale. 

The recent interest in the study of models of financial bubbles has gone a long way to illustrate the intrinsic interest in producing examples of ``naturally occurring'' strict local martingales with jumps. Our goal is to propose a method doing that.

The author wishes to thank the National University of Singapore, where much of the work on this paper was done, as well as discussions with Bob Jarrow and his office mate at the NUS Freddy Delbaen. He also benefited from many discussions with Yan Zeng back in 2005.

Our approach in this paper is to take a strict local martingale $Y$ with continuous paths on a given filtered probability space $(\Omega,\mcg,P,\mbg)$ where $\mbg=(\mcg_t)_{t\geq 0}$, and to project it onto a significantly smaller filtration $\mbf$ to get a new process $X$. We take the projection in such a way that $X$ is a strict local martingale in the smaller filtration. If the filtration is sufficiently ``poor" then it will lose many of the $\mbg$ stopping times, and some of the remaining $\mbf$ stopping times will become totally inaccessible. (We recall in Section~\ref{s2} the definition of totally inaccessible stopping times.) These totally inaccessible stopping times are potential jump times of the strict local martingale $X$. Since we have a way to construct continuous strict local martingales at will due to the work of Delbaen and Shirakawa~\cite{DelbaenShirakawa}, this in turn gives us a way to construct strict local martingales with jumps, also at will. It also illustrates how they can arise naturally in practice as deriving from continuous strict local martingales when information is restricted. 

In Section~\ref{s2} we present a method of filtration shrinkage that is simple and creates strict local martingales with jumps. We also establish some preliminary results in Section~\ref{s2} and continue with more in Section~\ref{s3}. Section~\ref{s4} contains the main results of the paper, and includes an analysis of the compensators of the jumps under supplemental hypotheses. 

\subsection*{Acknowledgements} The author wishes to thank two anonymous referees for their constructive comments that have led to an improvement in the clarity of the presentation of these results. 

\section{The Basic Framework}\label{s2}

Let us assume given a filtered complete probability space $(\Omega,\mcg,P,\mbg)$ satisfying the usual conditions, and which is rich enough to support a Brownian motion. To analyze the framework we develop, we need a concept taken from a paper of Jacod and Skorokhod~\cite{JS}, that of the jump of a filtration.

\begin{definition}\label{d1}
Let $S\leq T$ be two stopping times for a given filtration $\mbh$. We say that the filtration $\mbh$ \emph{jumps from $S$ to $T$} if for all $t\geq 0$ the $\gs$ algebras $\mch_S$ and $\mch_t$ coincide up to null sets on $\{S\leq t<T\}$.
\end{definition}

We wish to create a subfiltration $\mbf$, with $\mcf_t\subset\mcg_t$ for all $t\geq 0$. One way to do this is the following. Let $X$ be an adapted process with continuous sample paths (for example, $X$ could be a Brownian motion). We let $\tau_x$ denote the first passage time for the level $x$. That is, 
\bee\label{e1}
\tau_x=\inf\{t>0:X_t\geq x\}.
\eee
Next we let $\gL\subset (0,\infty)$, and then $\mct(\gL)=(\tau_x)_{x\in\gL}$ denotes the collections of all first passage times of $X$ for levels $x\in\gL$. We let $\mbf$ be the minimal filtration such that every $\tau_x, x\in\gL$ is a stopping time, and which satisfies the usual hypotheses. That is, $\mbf$ is given by $\mcf_t=(\cap_{u>t}\mcf_u^0)\vee\mcn$, for each $t\geq 0$, where $\mcn$ denotes all of the $P$ null sets of $\mcg$, and where
\bee\label{e2}
\mcf_t^0=\gs(\tau_x\leq s, s\leq t,x\in\gL)
\eee

\begin{definition}\label{d2}
We say a point $\gb\in\gL$ is \emph{isolated from below in $\gL$} if $\sup_{\ga\in\gL}\{\ga<\gb\}<\gb$.
\end{definition}

We can define this a little more abstractly, by considering a family of positive random variables $(T_\ga)_{\ga\in\gL}$, and we call this family \emph{totally ordered} if for any $\ga\neq\gb\in\gL$, either $T_\ga<T_\gb$ or $T_\gb<T_\ga$. Propositions~\ref{t1} and~\ref{t2} are taken from the 2006 PhD thesis of Yan Zeng~\cite{Zeng}.

\begin{proposition}\label{t1}
Let $\mct(\gL)=(T_\ga)_{\ga\in\gL}$ be a totally ordered family, and let $\mbf$ be the minimal filtration satisfying the usual hypotheses and making all of $\mctl$ stopping times. Fix a time $T_\gb\in\mctl$, and let $S=\sup_{\ga\in\gL}\{T_\ga: T_\ga<T_\gb\}$. 
If $S<T_\gb$ a.s. then $\mbf$ jumps from $S$ to $T_\gb$. 
\end{proposition}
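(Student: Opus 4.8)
The plan is to verify the definition of ``jumps from $S$ to $T_\gb$'' directly: I must show that for every $t\ge 0$, the $\gs$-algebras $\mcf_S$ and $\mcf_t$ agree up to null sets on the event $\{S\le t<T_\gb\}$. Since $\mcf$ is generated (up to the usual-conditions completion) by the passage times $\{T_\ga : \ga\in\gL\}$, it suffices to understand, for a fixed $\ga\in\gL$, how the event $\{T_\ga\le s\}$ behaves on $\{S\le t<T_\gb\}$ for $s\le t$. The key dichotomy is: either $T_\ga<T_\gb$, in which case $T_\ga\le S$ by definition of $S$ as the supremum of such times, or $T_\ga\ge T_\gb$ (using total ordering, these are the only options). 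On the event $\{S\le t<T_\gb\}$, in the first case $\{T_\ga\le s\}$ is already $\mcf_S$-measurable (it equals $\{T_\ga\le s\}$ with $T_\ga\le S$, so it lies in $\mcf_S$); in the second case, for $s\le t<T_\gb\le T_\ga$ the event $\{T_\ga\le s\}$ is empty on this set, hence trivially in $\mcf_S$.

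Concretely, I would first reduce to a generating class: the collection of sets of the form $\{T_\ga\le s\}$, $\ga\in\gL$, $s\ge 0$, together with the null sets, generates $\mcf_t^0\vee\mcn$ as $s$ ranges over $[0,t]$; a monotone-class / Dynkin argument then lifts the ``agreement on $\{S\le t<T_\gb\}$'' from this generating class to all of $\mcf_t$. Here one must be slightly careful because ``agreement up to null sets on a set $A$'' means $\mcf_S|_A$ and $\mcf_t|_A$ are equal as traces, i.e. $\{B\cap A : B\in\mcf_S\}$ and $\{B\cap A : B\in\mcf_t\}$ coincide modulo $P$-null sets; but $A=\{S\le t<T_\gb\}$ is itself $\mcf_t$-measurable (indeed $S$ is an $\mcf$-stopping time as a sup of stopping times, and $\{t<T_\gb\}\in\mcf_t$), and the trace $\gs$-algebra construction is compatible with the generation, so the reduction is legitimate. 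Second, I would establish the inclusion $\mcf_t\cap A\subseteq \mcf_S\cap A$ (mod null sets) using the dichotomy above applied to each generator $\{T_\ga\le s\}$ with $s\le t$; the reverse inclusion $\mcf_S\cap A\subseteq\mcf_t\cap A$ is immediate since $S\le t$ on $A$ gives $\mcf_S\subseteq\mcf_t$ on $A$ (optional stopping of the filtration, or directly $\mcf_S=\{B : B\cap\{S\le u\}\in\mcf_u\ \forall u\}$ and on $A$ one has $S\le t$). Finally I would handle the passage-time events for $s\in(t,\infty)$ is not needed, since $\mcf_t$ is generated by $s\le t$ only; and I should remember to include the $\gs(\cap_{u>t})$ right-continuous regularization and the $\mcn$-completion, which do not affect agreement up to null sets.

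The main obstacle I anticipate is the careful bookkeeping in the monotone-class step: one is comparing two $\gs$-algebras ($\mcf_S$ and $\mcf_t$) restricted to a common event $A$, and the naive approach of checking agreement on a generating $\pi$-system does not immediately transfer to the generated $\gs$-algebras unless one phrases everything in terms of the trace $\gs$-algebras $\mcf_S\cap A$ and $\mcf_t\cap A$ and checks that $A$ being $\mcf_t$-measurable makes $\mcf_t\cap A$ a genuine $\gs$-algebra on $A$ generated by the traces of the generators. The arguments involving the right-continuous regularization $\cap_{u>t}\mcf_u^0$ require an extra line: one must check that adding ``a bit of future'' $\gs$-algebra does not create new sets on $\{S\le t<T_\gb\}$, which again follows because on that event no $T_\ga$ with $T_\ga<T_\gb$ can fall in $(t,u]$ for $u$ close enough to $t$ unless it already fell at or before $S\le t$ — i.e. the family has no accumulation of passage times strictly between $S$ and $T_\gb$ by definition of $S$. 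Everything else is routine: the total ordering hypothesis does the real work, converting the geometric statement ``$S$ is the largest passage time below $T_\gb$'' into the measure-theoretic statement that the filtration learns nothing new on the time interval where it sits at level $S$.
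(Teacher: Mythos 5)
Your proposal is correct and follows essentially the same route as the paper: reduce to the generating events $\{T_\ga\le s\}$ with $s\le t$, use the total ordering (together with $S<T_\gb$) to split into the two cases $T_\ga\le S$ and $T_\ga\ge T_\gb$, observe that each generator's trace on $\{S\le t<T_\gb\}$ lies in $\mcf_S$ restricted to that set, and finish by a monotone-class argument, the reverse inclusion being the standard $\mcf_S\cap\{S\le t\}\subset\mcf_t$. The extra remarks about trace $\gs$-algebras and the right-continuous regularization are more explicit than the paper's exposition but do not change the substance.
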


\begin{proof}
Note that $S$ is a stopping time, since we can also express $S$ as the supremum of a countable number of stopping times. Therefore we can work with the $\gs$ algebra $\mcf_S$. Then $\mcf_S\cap\{S\leq t\}\subset\mcf_t$, and $\mcf_S\cap\{S\leq t<T_\gb\}\subset\mcf_t\cap\{S\leq t<T_\gb\}$.

Conversely, for any $\gamma\in\gL$, either $T_\gamma>S$ a.s., or $T_\gamma\leq S$ a.s. For $s\leq t$ if $T_\gamma\leq S$ then $T_\gamma\in\mcf_S$ and
$$
\{T_\gamma\leq s\}\cap\{S\leq t<T_\gb\}\in\mcf_S\cap\{S\leq t<T_\gb\}.
$$
If $T_\gamma> S$ then $T_\gamma\geq T_\gb$ a.s., and $\{T_\gamma\leq s\}\cap\{S\leq t<T_\gb\}=\emptyset$ for $s\leq t$. In any event, we have $\{T_\gamma\leq s\}\cap\{S\leq t<T_\gb\}\in\mcf_S\cap\{S\leq t<T_\gb\}.$ Since $\mcf_t=\gs(\{T_\gamma\leq s\}: s\leq t,\gamma\in\gL)$, by the monotone class theorem we conclude that $\mcf_t\cap\{S\leq t<T_\gb\}\subset\mcf_S\cap\{S\leq t<T_\gb\}$.
\end{proof}

Before we state the next proposition, let us recall the definition of a totally inaccessible stopping time. Recall that a \emph{predictable stopping time} $T$ is a stopping time such that there exists an announcing sequence of stopping times $T^n$ with $T^n\leq T^{n+1}$ a.s., $T^n<T$ a.s. each $n$, and $\lim_{n\rawi}T^n=T$ a.s. 

A \emph{totally inaccessible time} is a stopping time $U$ such that for any predictable time $T, P(T=U)=0$. 

A renowned theorem of P.A. Meyer states that with respect to the natural, completed filtration of a  a Feller strong Markov process, the collection of totally inaccessible stopping times coincides with the  collection of jump times of the process (this theorem is stated carefully in, for example,~\cite{PP}).

\begin{proposition}\label{t2}
Suppose $\mbf$ is the minimal filtration that satisfies the usual hypotheses and renders each $T_\ga$ a stopping time. For any $\gb\in\gL$, $T_\gb$ is totally inaccessible if and only if (i) $\gb$ is isolated from below in $\gL$ and (ii) the regular conditional distribution of $T_\gb$ given $\mcf_S$ is continuous. 
\end{proposition}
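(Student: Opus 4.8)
The plan is to prove both directions, with the main work being the characterization via the filtration-jump structure established in Proposition~\ref{t1}. First I would observe that when $\gb$ is \emph{not} isolated from below in $\gL$, then $S=\sup_{\ga\in\gL}\{T_\ga:T_\ga<T_\gb\}=T_\gb$ a.s. (since the levels accumulate at $\gb$ and $X$ has continuous paths, the corresponding passage times increase up to $\tau_\gb$), so $S$ is itself a stopping time announcing $T_\gb$ from below; one checks $S<T_\gb$ cannot hold on a set of positive measure, and in fact $T_\gb$ is a predictable time, hence not totally inaccessible. This disposes of the ``only if'' part for condition (i). For the ``only if'' part of condition (ii): assuming $\gb$ is isolated from below, Proposition~\ref{t1} gives that $\mbf$ jumps from $S$ to $T_\gb$, so on $\{S\le t<T_\gb\}$ the information in $\mcf_t$ is just $\mcf_S$; if the regular conditional distribution of $T_\gb$ given $\mcf_S$ had an atom, one could construct a predictable time hitting $T_\gb$ with positive probability (e.g.\ a suitable $\mcf_S$-measurable constant on the atom, announced by a deterministic sequence), contradicting total inaccessibility.

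For the ``if'' direction, assume (i) and (ii). Let $T$ be any predictable $\mbf$-stopping time with announcing sequence $T^n\uparrow T$, $T^n<T$. I want to show $P(T=T_\gb)=0$. The key point is that on $\{S\le t<T_\gb\}$ the filtration is frozen at $\mcf_S$ (Proposition~\ref{t1}), so the ``arrival'' of the event $\{T_\gb\le t\}$ brings genuinely new information; intuitively $T_\gb$ behaves, conditionally on $\mcf_S$, like a random time with a diffuse law on $(S,\infty)$. On the event $\{T=T_\gb\}$ we have $T^n<T_\gb$ for all $n$, and $T^n\wedge t$ is $\mcf_{T^n}\subset\mcf_{(T_\gb)-}$-measurable; one argues that $\lim_n T^n$, being $\mcf_{(T_\gb)-}$-measurable and restricted to $\{S\le T^n<T_\gb\}$, is in fact $\mcf_S$-measurable (again using the jump property, which forces $\mcf_{t-}=\mcf_S$ on $S\le t<T_\gb$). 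Hence $T\mathbf 1_{\{T=T_\gb\}}$ is $\mcf_S$-measurable, say equal to an $\mcf_S$-measurable random variable $V$. Then
\[
P(T=T_\gb)=E\big[P(T_\gb=V\mid\mcf_S)\big]=0
\]
because the regular conditional distribution of $T_\gb$ given $\mcf_S$ is continuous (atomless), so it assigns zero mass to the single point $V(\omega)$ for $P$-a.e.\ $\omega$. Since $T$ was an arbitrary predictable time, $T_\gb$ is totally inaccessible.

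The step I expect to be the main obstacle is the measurability argument in the ``if'' direction: rigorously showing that on $\{T=T_\gb\}$ the limit of the announcing sequence $T^n$ is $\mcf_S$-measurable, i.e.\ that no information beyond $\mcf_S$ can ``leak in'' before time $T_\gb$ to help a predictable time anticipate $T_\gb$. This requires carefully unwinding the definition of the strict pre-$T_\gb$ $\gs$-algebra $\mcf_{(T_\gb)-}$ in terms of the generators $\{T_\ga\le s\}$, using that $\gb$ is isolated from below so that all $\ga\in\gL$ with $T_\ga<T_\gb$ satisfy $T_\ga\le S$, and invoking Proposition~\ref{t1} to identify the relevant trace $\gs$-algebras. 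Once that measurability is in hand, the conclusion follows immediately from the atomlessness hypothesis via the conditional-probability computation above; conversely, an atom in that conditional law would directly produce an accessible (indeed predictable) part of $T_\gb$, giving the reverse implication.
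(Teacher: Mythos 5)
Your forward direction for (i) matches the paper's argument: if $\gb$ is not isolated from below, the first-passage times for levels $\ga_n\nearrow\gb$ give an announcing sequence for $T_\gb$, making it predictable. But from there your route diverges substantially from the paper's. You work directly with the definition of totally inaccessible (confronting an arbitrary predictable time $T$ and arguing $P(T=T_\gb)=0$), whereas the paper routes everything through the compensator: using Proposition~\ref{t1} it writes down the explicit compensator of $1_{\{t\geq T_\gb\}}$, namely
\[
\int_0^t 1_{(S,T_\gb]}(u)\,\frac{P(T_\gb\in du\mid\mcf_S)}{P(T_\gb\geq u\mid\mcf_S)},
\]
and then invokes the standard equivalence ``$T_\gb$ is totally inaccessible $\iff$ its compensator is continuous'' in both directions. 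That reduces both (ii) and the converse to reading off whether the conditional law is atomless, with no measurability gymnastics at all.

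The gap in your proposal is in exactly the step you flag. The jump property from Proposition~\ref{t1} says that the \emph{trace} $\sigma$-algebras $\mcf_t\cap\{S\le t<T_\gb\}$ and $\mcf_S\cap\{S\le t<T_\gb\}$ coincide; it does \emph{not} say $\mcf_t\cap\{S\le t<T_\gb\}\subset\mcf_S$, because the event $\{S\le t<T_\gb\}$ itself is generally not in $\mcf_S$. So the inference ``$T$ on $\{T=T_\gb\}$ is built from pre-$T_\gb$ information, hence $\mcf_S$-measurable'' does not go through as written. In fact the naive strengthening $\mcf_{(T_\gb)-}\subset\mcf_S$ cannot be true: $T_\gb$ is always $\mcf_{(T_\gb)-}$-measurable, yet under (ii) the conditional law of $T_\gb$ given $\mcf_S$ is atomless, so $T_\gb$ is certainly not $\mcf_S$-measurable. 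Thus $T\mathbf 1_{\{T=T_\gb\}}$ being $\mcf_S$-measurable needs a genuinely different justification, and I do not see how to close it by the trace argument you sketch. Your ``only if'' argument for (ii) (constructing a predictable time from an atom of the conditional law) is in the right spirit, though the announcing sequence should be something like $(V-1/n)\vee S$ rather than a deterministic sequence, and one must check these are $\mbf$-stopping times via the jump property. Overall: a legitimate alternative strategy, but with an unresolved (and, I suspect, hard-to-resolve) measurability step that the paper's compensator approach sidesteps entirely.
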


\begin{proof}
First we assume $T_\gb$ is totally inaccessible. Suppose $\gb$ is not isolated from below in $\gL$. Then there exists a sequence $\ga_n\nearrow\beta$, and the stopping times $T_{\ga_n}\nearrow T_\gb$ a.s., and  $T_{\ga_n}<T_\gb$ a.s. for each $n$. Thus $T_\gb$ is predictable, a contradiction. Moreover since $T_\gb$ is totally inaccessible, its compensator is continuous. Thus such a sequence $\ga_n\nearrow\beta$ cannot exist, which implies that $S<T_\gb$ a.s. 

By Proposition~\ref{t1} $\mbf$ jumps from $S$ to $T_\gb$ and it is therefore simple to check (see~\cite[Theorem 4, p.20]{Zeng}) that its compensator is 
\bee\label{e3}
\int_0^t1_{(S,T_\gb]}(u)\frac{P(T_\gb\in du\vert\mcf_S)}{P(T_\gb\geq u\vert\mcf_S)}
\eee
and since $T_\gb$ is totally inaccessible by hypothesis, it must be continuous.

For the converse, we wish to show $T_\gb$ is totally inaccessible. Since by hypothesis $\gb$ is isolated from below in $\gL$, we have $S<T_\gb$ by the path continuity of $X$.  By Proposition~\ref{t1} and~\eqref{e3} we have that when $P(T_\gb\in du\vert\mcf_S)$ is continuous then $T_\gb$ has a continuous compensator. We conclude that $T_\gb$ is totally inaccessible. 
\end{proof}

Note that it is not possible at this level of generality to conclude that the compensators of times such as $T_\gb$ where $\gb$ is a left endpoint of $\gL$ are absolutely continuous with respect to Lebesgue measure. One way to achieve such results is to relate them to a strong Markov process (eg, a Hunt process). This approach is pursued in~\cite{GZ} and in more generality in~\cite{JMP}. But we hasten to add that if, for example, the underlying continuous stochastic process $X$ is a standard Brownian motion, then one can verify that all compensators of the times $T_\gb$ of left isolated points of $\gL$ do indeed have absolutely continuous paths. 

\begin{remark}\label{r1} The idea of developing a theory of filtration shrinkage is not new. The seminal work in this area is the 1978 paper of Br\'emaud and Yor~\cite{BY}. The idea of shrinking the filtration by considering first passage times corresponding to all $x\in\gL$ is not new and was considered by A. Deniz Sezer in her thesis, resulting in the papers~\cite{Deniz} and~\cite{JPSezer}. Her approach was embedded within the theory of Markov processes and excursion theory, involving homogeneous random sets. In the second paper~\cite{JPSezer}, her techniques are applied to resolve questions in the theory of credit risk, and issues of martingale representation are broached. Such techniques could also be used in this framework (under additional assumptions), but we do not attempt to do that here. Recent papers using the idea of filtration shrinkage include~\cite{FP},\cite{JarrowP}, and~\cite{Martin}.
\end{remark}

\section{Theory Preliminaries}\label{s3}

In this section we develop and recall some theory that we will use to establish our results in Section~\ref{s4}, the latter being the heart of the paper.

We first consider the case of \emph{nonnegative} strict local martingales. These arise often in finance, and are connected to the existence of models of financial bubbles (see, e.g.,~\cite{CH},\cite{JPShimbo},\cite{BFN},\cite{PhilipToBe}). Delbaen and Shirakawa~\cite{DelbaenShirakawa} have given a method of constructing examples of continuous strict local martingales as solutions of stochastic differential equations (see also~\cite{MU} and~\cite{KL}), hence it is reasonable to begin by supposing that $Z$ is a $\mbg$ continuous nonnegative strict local martingale. Let $U$ be an arbitrary continuous $\mbg$ adapted process, and let $\gL$ be as before: $\gL\subset [0,\infty)$ and we assume $\gL$ has left isolated points, and the left isolated points contain a sequence tending to $\infty$. We let
\bee\label{e9}
\tau_x=\inf\{t>0:U_t\geq x\}.
\eee
We define  $\mbf$ by
\bee\label{e10}
\mcf_t=\cap_{u>t}\mcf^0_u\text{ where }\mcf^0_t=\gs(\tau_x\leq s, s\leq t,x\in\gL),
\eee
where the $P$ null sets are added to all $\mcf_t^0$. Note that $\mbf$ satisfies the ``usual hypotheses."

\begin{proposition}\label{t4}
Suppose the quadratic variation $t\mapsto [Z,Z]_t$ is always strictly increasing, a.s.If the reducing stopping times of $Z$ are also stopping times in $\mbf$, then the optional projection $M$ of $Z$ onto $\mbf$ is an $\mbf$ strict local martingale. Moreover if $U=Z$ then $M$ has jumps at every time $\tau_\gb$, where $\gb$ is a left isolated point of $\gL$.
\end{proposition}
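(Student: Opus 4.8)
The statement has two essentially independent parts. For the first, the plan is this: since $Z\ge 0$ is a $\mbg$ local martingale it is a $\mbg$ supermartingale, so its $\mbf$ optional projection $M$ exists and is an $\mbf$ supermartingale, and I fix its c\`adl\`ag modification. To upgrade ``supermartingale'' to ``local martingale'' I would take a reducing sequence $(R_n)$ for $Z$, so each $Z^{R_n}$ is a uniformly integrable $\mbg$ martingale; by hypothesis each $R_n$ is also an $\mbf$ stopping time, and then, by the standard fact that the optional projection of a uniformly integrable martingale onto a subfiltration is a uniformly integrable martingale and commutes with stopping at stopping times of that subfiltration, ${}^{o\mbf}(Z^{R_n})=M^{R_n}$, which is a uniformly integrable $\mbf$ martingale. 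Since $R_n\uparrow\infty$, $M$ is an $\mbf$ local martingale. For strictness I would note $\mbe[M_t]=\mbe[{}^{o\mbf}Z_t]=\mbe[Z_t]$ for all $t$ and $\mbe[M_0]=\mbe[Z_0]$; because $Z$ is a nonnegative \emph{strict} local martingale, the non-increasing map $t\mapsto\mbe[Z_t]$ is non-constant, so $\mbe[M_t]<\mbe[M_0]$ for some $t$ and $M$ is not an $\mbf$ martingale. (The hypothesis on $[Z,Z]$ is not used for this part.)

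For the second part, assume $U=Z$, fix a left isolated $\gb\in\gL$, and set $\ga^{*}=\sup\{\ga\in\gL:\ga<\gb\}<\gb$ and $S=\sup\{\tau_\ga:\ga\in\gL,\ \ga<\gb\}$. Path continuity of $Z$ gives $Z_{\tau_x}=x$ on $\{\tau_x<\infty\}$, hence $Z_S=\ga^{*}$ on $\{S<\infty\}$, and since $\ga^{*}<\gb$ continuity also forces $S<\tau_\gb$ on $\{\tau_\gb<\infty\}$; by Proposition~\ref{t1}, $\mbf$ jumps from $S$ to $\tau_\gb$, so $\mcf_S$ and $\mcf_t$ agree up to null sets on $\{S\le t<\tau_\gb\}$. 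The device I would use is that $Z^{\tau_\gb}$ is \emph{bounded} by $\gb$, hence a uniformly integrable $\mbg$ martingale; therefore $M^{\tau_\gb}={}^{o\mbf}(Z^{\tau_\gb})$ is a bounded c\`adl\`ag $\mbf$ martingale with $M^{\tau_\gb}_t=\mbe[Z^{\tau_\gb}_\infty\mid\mcf_t]$, it agrees with $M$ on $[0,\tau_\gb]$, and so $\gD M_{\tau_\gb}=\gD M^{\tau_\gb}_{\tau_\gb}$. I would then compute the two ends of this jump on $\{\tau_\gb<\infty\}$. Evaluating the optional projection at the $\mbf$ stopping time $\tau_\gb$ gives $M_{\tau_\gb}=\mbe[Z_{\tau_\gb}\mid\mcf_{\tau_\gb}]=\gb$, since $Z_{\tau_\gb}=\gb$ is constant there. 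For the left limit: on $\{S\le t<\tau_\gb\}$ one has, using $\mcf_t=\mcf_S$ there and a trace-$\gs$-algebra argument, $M^{\tau_\gb}_t=\mbe[Z^{\tau_\gb}_\infty\mid\mcf_S]$, which does not depend on $t$; hence the path of $M^{\tau_\gb}$ is constant on $[S,\tau_\gb)$ and $M_{\tau_\gb-}=\mbe[Z^{\tau_\gb}_\infty\mid\mcf_S]$ on $\{\tau_\gb<\infty\}$. Since $Z^{\tau_\gb}$ is a $\mbg$ martingale and $S\le\tau_\gb$, optional sampling and the tower property give $\mbe[Z^{\tau_\gb}_\infty\mid\mcf_S]=\mbe[Z_S\mid\mcf_S]=\ga^{*}$ on $\{\tau_\gb<\infty\}$ (as $Z_S=\ga^{*}$ there is $\mcf_S$ measurable). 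Thus $\gD M_{\tau_\gb}=\gb-\ga^{*}>0$ there, and $P(\tau_\gb<\infty)>0$ (otherwise $Z^{\tau_\gb}=Z$ a.s.\ would be a bounded, hence true, martingale, contradicting strictness); the strict increase of $[Z,Z]$ is what additionally guarantees, via the time-change representation of continuous local martingales and Proposition~\ref{t2}, that each such $\tau_\gb$ is totally inaccessible, so these are genuine inaccessible jump times of $M$.

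The main obstacle is the identification of $M_{\tau_\gb-}$. At first sight one expects $M_{\tau_\gb-}=\gb=M_{\tau_\gb}$ by continuity of $Z$, hence \emph{no} jump; the crux, and the reason a jump appears, is that the filtration is frozen on $[S,\tau_\gb)$, so there $M$ equals the single $\mcf_S$ conditional expectation $\mbe[Z^{\tau_\gb}_\infty\mid\mcf_S]$ rather than a projection tracking $Z_t\uparrow\gb$. Making this rigorous is where the care goes: the trace-$\gs$-algebra bookkeeping on $\{S\le t<\tau_\gb\}$, together with the boundedness of $Z^{\tau_\gb}$ so that the optional projection commutes with the stopping and the conditional expectations can be passed to the limit. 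The remaining points — the behaviour when $Z_0$ does not lie below the relevant levels (so $S$ or $\tau_\gb$ may vanish) and the set $\{S=\infty\}$, which is disjoint from $\{\tau_\gb<\infty\}$ — are routine and do not affect the conclusion.
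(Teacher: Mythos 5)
Your proposal is correct and follows the same overall strategy as the paper: project, obtain an $\mbf$ local martingale via a reducing sequence that stays in $\mbf$ (the paper cites Theorem~11 of F\"ollmer--Protter, you re-derive it), infer strictness from the non-constant decreasing map $t\mapsto\mbe[Z_t]=\mbe[M_t]$, and compute the jump at $\tau_\gb$ using the fact that $\mbf$ jumps from $S$ to $\tau_\gb$, yielding $\gD M_{\tau_\gb}=\gb-\ga_0$. Where you add genuine value is in making explicit a step the paper leaves implicit: why $M$ is constant on $[S,\tau_\gb)$, so that $M_{\tau_\gb-}=M_S$. The paper writes this off to the filtration jump alone, but freezing $\mcf_t$ on $[S,\tau_\gb)$ does not by itself make $t\mapsto\mbe[Z_t\mid\mcf_t]$ constant, since $Z_t$ varies — indeed one might naively expect $M_{\tau_\gb-}=\gb$ by path continuity of $Z$. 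Your fix (stop at $\tau_\gb$ so that $Z^{\tau_\gb}$ is bounded by $\gb$, hence a uniformly integrable $\mbg$ martingale; therefore $M^{\tau_\gb}_t=\mbe[Z^{\tau_\gb}_\infty\mid\mcf_t]$ with a fixed terminal variable, which the frozen trace $\gs$-algebra then renders constant on $[S,\tau_\gb)$; optional sampling and the tower property identify the constant as $\mbe[Z_S\mid\mcf_S]=\ga_0$) is precisely the gap, and you also correctly flag the intuition that the jump exists because the observer is ``blind'' between $S$ and $\tau_\gb$ rather than tracking $Z_t\uparrow\gb$. This is a welcome clarification of the same argument, not a different route.
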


\begin{proof}
Since the reducing stopping times of the $\mbg$ strict local martingale $Z$ are stopping times in $\mbf$, it follows from Theorem 11 of~\cite{FP} that $M$ is an $\mbf$ local martingale. Since $Z$ is a nonnegative strict local martingale, we know that $t\mapsto E(Z_t)$ is decreasing. Choose an arbitrary point $\gb\in\gL$ that is isolated from below, and let $S$ be defined as before:
\bee\label{e11}
S=\sup_{\ga\in\gL}\{\tau_\ga: \tau_\ga<\tau_\gb\}
\eee
It remains to study the jumps of $M$ at the left isolated points of $\gL$. Note that we know by Theorem~\ref{t2} these are the totally inaccessible times of $\mbf$. We have that $\gD M_{\tau_\gb}=M_{\tau_\gb}-M_{\tau_\gb-}=M_{\tau_\gb}-M_S$, where $S=\sup_{\ga\in\gL}\{\tau_\ga: \tau_\ga<\tau_\gb\}$, since the filtration $\mbf$ jumps from $S$ to $\tau_\gb$ by Theorem~\ref{t1}. But $M_{\tau_\gb}=E(Z_{\tau_\gb}\vert\mcf_{\tau_\gb})=E(\gb\vert\mcf_{\tau_\gb})=\gb$, and $M_{S}=E(Z_{S}\vert\mcf_{S})=E(\ga_0\vert\mcf_{S})=\ga_0$, where $\ga_0=\sup\{x\in\gL\text{ such that }x<\gb\}$.
\end{proof}

We now turn from considering only nonnegative strict local martingales to general strict local martingales. We know already by Theorem 11 of~\cite{FP} that the optional projection of a $\mbg$ strict local martingale $Z$ onto a subfiltration $\mbf$ is again a local martingale, as long as a reducing sequence of stopping times in $\mbg$ is also a sequence of $\mbf$ stopping times. So the only issue is whether or not $Z$ being strict in $\mbg$ also implies that $Z$ is strict in $\mbf$. 

The general case follows from the positive case if we use the Krickeberg Decomposition for Local Martingales, established by Kazamaki in 1972~\cite{Kaz}. Let $\mct$ denote the collection of a.s. finite stopping times. For a local martingale $X$ we define a 1 norm by 
$$
\parallel X\parallel_1=\sup_{\{T\in\mct\}}E(\vert X_{T}\vert).
$$

\begin{theorem}[Krickeberg Decomposition for Local Martingales]\label{Krick}
Let $X$ be a local martingale.  Then $\parallel X\parallel_1=\sup_nE(\vert X_{T_n}\vert)$, for every reducing sequence of stopping times $(T_n)$ with $\lim_{n\rightarrow \infty}T_n=\infty$.  If $\parallel X\parallel_1<\infty$, then there exist two positive local martingales $X^p$ and $X^n$ such that 
\bee\label{e12}
X=X^p-X^n
\eee
\bee\label{e13}
\parallel X\parallel_1=\parallel X^p\parallel_1+\parallel X^n\parallel_1
\eee
The Krickeberg decomposition above is unique as long as one insists on~\eqref{e13}. Moreover, one can choose a reducing sequence of stopping times $(T_n)_{n\geq 1}$ that simultaneously reduces all three of $X$, $X^1$, and $X^2$.
\end{theorem}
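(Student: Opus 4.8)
The plan is to localize the classical Krickeberg decomposition (which is known for uniformly integrable martingales) along a reducing sequence. Fix reducing stopping times $(T_n)$ for $X$ with $T_n\raw\infty$, so each $X^{T_n}$ is a uniformly integrable martingale, and set $\xi_n:=X^{T_n}_\infty$. For the opening identity, I would note that $|X^{T_n}|$ is a uniformly integrable submartingale, so $\E|X_{T_n}|=\lim_{t\to\infty}\E|X_{T_n\wedge t}|\le\parallel X\parallel_1$ since every $T_n\wedge t$ lies in $\mct$; this gives $\sup_n\E|X_{T_n}|\le\parallel X\parallel_1$. For the reverse inequality, given any $T\in\mct$ one has $X_{T\wedge T_n}\to X_T$ a.s. (because $T$ is finite and $T_n\raw\infty$), while optional sampling of the uniformly integrable submartingale $|X^{T_n}|$ gives $\E|X_{T\wedge T_n}|\le\E|X_{T_n}|$; Fatou then yields $\E|X_T|\le\sup_n\E|X_{T_n}|$, and a supremum over $T\in\mct$ finishes the identity, which in particular is thereby seen to be independent of the reducing sequence.

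Assume now $\parallel X\parallel_1<\infty$. Applying optional sampling at $T_n\le T_{n+1}$ to the uniformly integrable martingale $X^{T_{n+1}}$ gives $\xi_n=\E(\xi_{n+1}\mid\mcf_{T_n})$, so $(\xi_n,\mcf_{T_n})_{n\ge1}$ is a discrete martingale, bounded in $L^1$ by $\parallel X\parallel_1$ thanks to the first part; hence $\xi_\cdot^+$ and $\xi_\cdot^-$ are $L^1$-bounded submartingales for the discrete filtration $(\mcf_{T_n})$. The classical Krickeberg decomposition of $X^{T_n}$ reads $X^{T_n}=P^n-N^n$ with $P^n_t=\E(\xi_n^+\mid\mcf_t)$ and $N^n_t=\E(\xi_n^-\mid\mcf_t)$. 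I would then extract the key monotonicity: for $m\ge n$, conditioning the submartingale inequality $\E(\xi_m^+\mid\mcf_{T_n})\ge\xi_n^+$ further down to $\mcf_{t\wedge T_n}\subset\mcf_{T_n}$ and using optional sampling of $P^m$ gives $P^m_{t\wedge T_n}\ge P^n_{t\wedge T_n}$, and similarly for $N$. Since $T_n\raw\infty$, for each fixed $t$ the sequence $(P^n_t)_n$ is eventually increasing, so $X^p_t:=\lim_nP^n_t$ and $X^n_t:=\lim_nN^n_t$ exist, and they are integrable by Fatou since $\E(P^n_t)=\E(\xi_n^+)\le\parallel X\parallel_1$. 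Letting $n\to\infty$ in $X_{t\wedge T_n}=X^{T_n}_t=P^n_t-N^n_t$ with $X_{t\wedge T_n}\to X_t$ a.s. gives $X=X^p-X^n$ on passing to càdlàg versions.

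Next I would verify that $X^p$ and $X^n$ are positive local martingales sharing the reducing sequence $(T_n)$. The genuine monotone convergence $(X^p)^{T_n}_t=\lim_m\uparrow P^m_{t\wedge T_n}$ lets the conditional expectation pass through the limit (monotone convergence for conditional expectations, together with optional sampling of the martingales $P^m$), giving $\E\big(X^p_{t\wedge T_n}\mid\mcf_s\big)=X^p_{s\wedge T_n}$ for $s\le t$; with $X^p\ge0$ this makes $(X^p)^{T_n}$ a uniformly integrable martingale, so $(T_n)$ reduces $X^p$, and symmetrically $X^n$; thus $(T_n)$ reduces all three of $X$, $X^p$, $X^n$. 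For \eqref{e13}, $\parallel X^p\parallel_1=\E(X^p_0)=\lim_n\E(\xi_n^+)$ and $\parallel X^n\parallel_1=\lim_n\E(\xi_n^-)$, so their sum is $\lim_n\E|\xi_n|=\sup_n\E|\xi_n|=\parallel X\parallel_1$ by the first part. Uniqueness follows from minimality: the construction exhibits $X^p$ as the smallest positive supermartingale dominating $X$ (any positive local martingale $A\ge X$ has $A^{T_n}$ a positive supermartingale with $A^{T_n}_\infty\ge\xi_n^+$, hence $A^{T_n}\ge P^n$ and so $A\ge X^p$ in the limit), and symmetrically $X^n$ is smallest among positive supermartingales dominating $-X$; given another decomposition $X=A-B$ obeying \eqref{e13} we get $A\ge X^p$ and $B\ge X^n$, so $L:=A-X^p=B-X^n\ge0$ is a positive local martingale, hence a supermartingale, and comparing \eqref{e13} for the two decompositions at time $0$ forces $\E(L_0)=0$, whence $L\equiv0$.

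The step I expect to be the real obstacle is the limit passage in the existence part: a priori $(\xi_n)$ is only $L^1$-bounded and not uniformly integrable, so the stopped classical decompositions are \emph{not} mutually consistent and $\lim_nP^n_t$ might conceivably be infinite or fail to be a martingale. What saves it is precisely the monotonicity $P^m_{t\wedge T_n}\uparrow$ in $m$, which stems from $\xi_\cdot^\pm$ being a submartingale for the discrete filtration $(\mcf_{T_n})$; this simultaneously forces finiteness (through Fatou and the opening identity) and legitimizes moving conditional expectations through the limit, so that $X^p$ and $X^n$ come out as genuine positive local martingales rather than merely supermartingales.
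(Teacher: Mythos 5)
Your proof is correct in its essentials, and it is worth noting that the paper does not actually prove this theorem: it cites Kazamaki's 1972 note for the decomposition and the norm identity, and merely remarks that the ``simultaneous reduction'' statement is not in Kazamaki but ``simple to check directly.'' Your localization argument supplies exactly what that remark alludes to. By constructing $X^p$ and $X^n$ as the monotone limits $\lim_m P^m_{t\wedge T_n}$ of the stopped classical Krickeberg decompositions, the original reducing sequence $(T_n)$ automatically reduces $X^p$ and $X^n$ along with $X$, so the last claim of the theorem drops out of the construction for free; the monotonicity $P^m_{t\wedge T_n}\uparrow$ in $m$ is the key that both guarantees finiteness of the limit and licenses the exchange of $E(\,\cdot\mid\mcf_s)$ with $\lim_m$. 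The opening norm identity via Fatou, the verification of~\eqref{e13} from $E(\xi_m^+)+E(\xi_m^-)\uparrow\parallel X\parallel_1$, and the uniqueness via minimality are all sound.

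One small gap: you assert that $X^p\ge 0$ together with the martingale identity ``makes $(X^p)^{T_n}$ a uniformly integrable martingale,'' but nonnegativity of a martingale alone does not give uniform integrability. What does close it: $E\big(P^m_{t\wedge T_n}\big)=E(\xi_m^+)$ for all $t$, so by monotone convergence $E\big(X^p_{t\wedge T_n}\big)=\lim_m E(\xi_m^+)=E(X^p_0)$ is constant in $t$; combined with the a.s.\ convergence $X^p_{t\wedge T_n}\to X^p_{T_n}$ as $t\to\infty$, Scheff\'e's lemma gives $L^1$-convergence, hence uniform integrability of $(X^p)^{T_n}$. With that sentence added, the proof is complete and provides a self-contained argument for a theorem the paper only references.
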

We note that the last statement in Theorem~\ref{Krick} is not contained in Kazamaki's original paper, but it is simple to check directly.

\begin{proposition}\label{t5}
Let $Y$ be a strict local martingale on a space $(\Omega,\mcg,P,\mbg)$ and let $\mbf$ be a subfiltration of $\mbg$. Assume there exists a sequence of stopping times $(T_n)$ in $\mbf$ that form a reducing sequence for $Y$ in $\mbg$. Also assume that 
$$
\parallel Y\parallel_1=\sup_nE(\vert Y_{R_n}\vert)<\infty
$$
for every reducing sequence $(R_n)_{n\geq 1}$ in $\mbg$. Then $M=^oY$ is a strict local martingale in $\mbf$ with a reducing sequence for $M$ being $(T_n)_{n\geq 1}$.
\end{proposition}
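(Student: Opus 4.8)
The plan is to reduce the general case to the non-negative case of Proposition~\ref{t4}, by projecting a Krickeberg decomposition (Theorem~\ref{Krick}) of $Y$.

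\emph{Step 1: $M$ is an $\mbf$ local martingale reduced by $(T_n)$.} Since $\parallel Y\parallel_1<\infty$ we have $E|Y_t|\le\parallel Y\parallel_1<\infty$ for every fixed $t$, so $M={}^oY$ is a well-defined process with $M_t=E(Y_t\mid\mcf_t)$. Because $(T_n)$ is a sequence of $\mbf$ stopping times reducing $Y$ in $\mbg$, Theorem 11 of~\cite{FP} gives directly that $M$ is an $\mbf$ local martingale and that $(T_n)$ reduces $M$ in $\mbf$.

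\emph{Step 2: project the Krickeberg decomposition.} As $\parallel Y\parallel_1<\infty$, Theorem~\ref{Krick} provides $Y=Y^p-Y^n$ with $Y^p,Y^n$ non-negative $\mbg$ local martingales and $\parallel Y\parallel_1=\parallel Y^p\parallel_1+\parallel Y^n\parallel_1$. Carrying out the construction of this decomposition through the given reducing sequence (decompose each uniformly integrable martingale $Y^{T_n}$ by the classical Krickeberg theorem and pass to the increasing limit), one sees that $(T_n)$ reduces $Y^p$ and $Y^n$ as well: each $(Y^p)^{T_n}$ is an increasing, $L^1$-bounded (this is where $\parallel Y\parallel_1<\infty$ enters) limit of uniformly integrable martingales, hence a uniformly integrable martingale, and likewise for $Y^n$. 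Theorem 11 of~\cite{FP} then applies to $Y^p$ and $Y^n$ separately, so ${}^oY^p$ and ${}^oY^n$ are non-negative $\mbf$ local martingales reduced by $(T_n)$, and by linearity of the optional projection $M={}^oY^p-{}^oY^n$.

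\emph{Step 3: transfer strictness (the main obstacle).} Since $Y$ is strict, $Y^p$ and $Y^n$ cannot both be martingales; relabelling if necessary, $Y^p$ is strict, hence a strict non-negative supermartingale, so $E({}^oY^p_t)=E(Y^p_t)$ is not constant in $t$. Being an $\mbf$ local martingale with non-constant expectation, ${}^oY^p$ is therefore strict --- this is the argument already used for Proposition~\ref{t4}. The delicate point, and the step I expect to be the main obstacle, is to pass from ``${}^oY^p$ is strict'' to ``$M$ is strict'': a priori ${}^oY^p$ and ${}^oY^n$ could partly cancel, leaving $M$ a genuine martingale. To exclude this I would show that $M={}^oY^p-{}^oY^n$ \emph{is} the Krickeberg decomposition of $M$ in $\mbf$ (note $\parallel M\parallel_1\le\parallel Y\parallel_1<\infty$, computed in $\mbf$, so a Krickeberg decomposition of $M$ exists). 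Since $\parallel N\parallel_1=E(N_0)$ for a non-negative $\mbf$ local martingale $N$, and $E({}^oY^p_0)=E(Y^p_0)$, $E({}^oY^n_0)=E(Y^n_0)$, this identification is equivalent to the equality of $\parallel M\parallel_1$ (in $\mbf$) with $\parallel Y\parallel_1$ (in $\mbg$), i.e.\ to the assertion that the optional projection loses no $L^1$-mass; this is exactly where the hypotheses on $(T_n)$ and on $\parallel Y\parallel_1$ must be used in an essential way, and where I expect most of the work. Granting the identification, uniqueness in Theorem~\ref{Krick}, together with the classical fact that the Krickeberg components of a true martingale are again true martingales, shows $M$ must be strict: otherwise its positive Krickeberg component ${}^oY^p$ would be a true martingale, contradicting the above. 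With Step~1, this completes the proof.
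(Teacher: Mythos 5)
Your Steps 1 and 2 match the paper's argument closely: the paper likewise takes the Krickeberg decomposition $Y=Y^p-Y^n$, notes that at least one component, say $Y^p$, must be strict, projects, and argues that $M^p={}^oY^p$ is a nonnegative $\mbf$ local martingale with strictly decreasing expectation and hence strict. So up to the point you call the ``main obstacle,'' the two arguments are the same.

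Where they diverge is exactly at that obstacle. The paper disposes of it in one line, asserting that ``it does not matter if $M^n$ is a martingale or a strict local martingale, in either case the difference $M^p-M^n$ is a strict local martingale because $M^p$ is one.'' As you correctly observe, this is not a valid inference in general: the difference of a strict local martingale and an arbitrary local martingale in the same filtration need not be strict, since the strict parts can cancel. Your proposed remedy --- identify $M^p-M^n$ as the Krickeberg decomposition of $M$ in $\mbf$ via the norm identity $\parallel M\parallel_1=\parallel Y\parallel_1$, and then invoke uniqueness --- is a sensible strategy, but as you yourself acknowledge it is not carried out, and in fact the identity $\parallel M\parallel_1=\parallel Y\parallel_1$ does not follow from the stated hypotheses. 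For instance, let $N$ be a nonnegative continuous strict local martingale with $N_0>0$ and reducing times $(T_n)$ in its own (completed) filtration $\mbf$, let $Z$ be a $\mcg_0$-measurable symmetric random sign independent of $N$, let $\mbg$ be the join of the two filtrations, and set $Y=ZN$. Then $(T_n)$ are $\mbf$-stopping times that reduce $Y$ in $\mbg$, $\parallel Y\parallel_1=E(N_0)<\infty$, and $Y$ is strict in $\mbg$; yet ${}^oY\equiv 0$, so $\parallel M\parallel_1=0$ and $M$ is a (trivial) martingale. Thus the gap you flagged is a genuine one, it is also present in the paper's proof, and it cannot be closed without supplementary hypotheses; your account is simply more careful about locating it.
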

We emphasize that the fact that $M=^oY$ is a local martingale is established in~\cite{FP}. The novelty in Theorem~\ref{t5} is that it is again strict.

\begin{proof}
By assumption we have $\parallel Y\parallel_1<\infty$. Let $Y=Y^p-Y^n$ be its Krickeberg decomposition. If $Y^p$ and $Y^n$ are both martingales (ie, not strict local martingales), then so is $Y$, and this violates our assumption that it is a strict local martingale. So at least one of $Y^p$ and $Y^n$ is a strict local martingale. Let us assume that $Y^p$ is strict. 

Since $Y^p$ is a positive strict local martingale, its expectation must be decreasing, and in particular cannot be constant. So its projection onto $\mbf$, call it $M^p=^oY^p$, is also a strict local martingale, because it is positive, has decreasing expectation, and the $\mbf$ reducing sequence $(T_n)_{n\geq 1}$ is a reducing sequence for $Y^p$, by assumption and Theorem~\ref{Krick}.  We know also that $M^n$ is a local martingale. It does not matter if it is a martingale or a strict local martingale, in either case the difference $M^p-M^n$ is a strict local martingale because $M^p$ is one. Note that the reasoning is identical had we chosen $Y^n$ to be strict, instead of $Y^p$.
\end{proof}

\section{Main Results}\label{s4}

We will obtain examples of strict local martingales with jumps by projecting continuous strict local martingales onto a subfiltration. Therefore let us first briefly review the situation for continuous processes. 

There are several examples of {continuous} strict local martingales.  The easiest and perhaps most useful family known to date is that of Delbaen and Shirakawa~\cite{DelbaenShirakawa}. See alternatively Mijatovic and Urusov~\cite{MU}. They consider solutions of stochastic differential equations of the form
\bee\label{e5}
dX_t=\gs(X_t)dB_t,\qquad X_0=1,
\eee
where $B$ is standard Brownian motion. Under the condition that 
\bee\label{e6}
\int_0^\gep\frac{x}{\gs(x)^2}ds=\infty
\eee
 the solution $X$ of~\eqref{e5} is strictly positive ($\gep>0$). They show that $X$ is a strict local martingale if and only if
 \bee\label{e7}
 \int_\gep^\infty\frac{x}{\gs(x)^2}ds<\infty.
 \eee 
An example is the famous inverse Bessel(3) process of Johnson and Helms~\cite{JH}, which corresponds to $\gs(x)=-x^2$. 

We next establish a preliminary result that we find useful later. Let $B$ be a standard Brownian motion on a space $(\Omega,\mcg,P,\mbg)$ and let
\bee\label{e1bis}
\tau_x=\inf\{t>0:B_t\geq x\}\text{ and }\nu_x=\inf\{t>0:B_t\leq x\}
\eee
Let $\gL\subset (0,\infty)$, and let $\mbf$ be given by
\bee\label{e2bis}
\mcf^0_t=\gs(\tau_x\leq s, s\leq t,x\in\gL)\vee\gs(\nu_x\leq s, s\leq t,x\in\gL)
\eee
and $\mcf_t$ is $\cap_{u>t}\mcf^0_t\vee\mcn$, where $\mcn$ are the $P$ null sets of $\mcg$.
\begin{eqnarray}\label{e2ter}
\text{ Assume that }&\gL&\text{ contains a sequence of points }\ga_n\text{ with }\limsup\ga_n=\infty\notag\\
&&\text{ and }\liminf\ga_n=-\infty.
\end{eqnarray}
\begin{theorem}\label{t2bis}
Let $B$ be Brownian motion and let $\mbf$ be as given in~\eqref{e2bis} above, made right continuous. Let $X$ be a solution of an SDE of the form
\bee\label{e7bis}
X_t=x+\int_0^t\gs(X_s)dB_s
\eee
where $\gs\in\mcc^2, \gs>0$, and both $\gs$ and $\gs\gs^\prime$ are Lipschitz continuous. Then there exists a sequence of $\mbf$ stopping times $(T_n)_{n\geq 1}$ increasing to $\infty$ a.s. such that $X_{t\wedge T_n}$ is a bounded process, for each $n$.
\end{theorem}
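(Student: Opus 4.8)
The plan is to realize $X$ as the image, under a strictly increasing $\mcc^2$ bijection of $\mbr$, of a process that differs from $B$ only by a finite variation term with bounded density; then confining $B$ between two levels of $\gL$, together with a deterministic bound on elapsed time, will confine $X$ within a deterministic interval, and the confining stopping times can be taken to be hitting times of $B$ through levels of $\gL$, which are $\mbf$ stopping times by the very definition of $\mbf$.

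Concretely, I would first record what the hypotheses give: Lipschitz continuity of $\gs$ yields a unique nonexploding strong solution $X$ of \eqref{e7bis}, forces $\gs$ to grow at most linearly, and bounds $\gs'$, say $|\gs'|\le L$. Setting $f(y)=\int_x^y\frac{du}{\gs(u)}$, the positivity and $\mcc^2$ regularity of $\gs$ make $f$ a strictly increasing $\mcc^2$ function with $f(x)=0$, while the at most linear growth of $\gs$ makes $\int^{\pm\infty}\frac{du}{\gs(u)}$ diverge, so $f$ is a homeomorphism of $\mbr$ onto itself with continuous inverse. Applying It\^o's formula to $f(X)$, using $f'\gs\equiv 1$ and $f''\gs^2=-\gs'$, then gives
\bee
Y_t:=f(X_t)=B_t-\tfrac12\int_0^t\gs'(X_s)\,ds ,
\eee
so that $|Y_t-B_t|\le\tfrac{L}{2}t$ for every $t\ge 0$.

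For the localization I would use \eqref{e2ter} to pick an increasing sequence $(\gb_n)\subset\gL$ tending to $+\infty$ and a decreasing sequence $(\gth_n)\subset\gL$ tending to $-\infty$, and set $T_n:=\tau_{\gb_n}\wedge\nu_{\gth_n}\wedge n$. By the definition \eqref{e2bis} of $\mbf$ each $\tau_{\gb_n}$ and $\nu_{\gth_n}$ is an $\mbf$ stopping time, hence so is $T_n$; it is nondecreasing in $n$; and since $\tau_{\gb_n}\uparrow\infty$ and $\nu_{\gth_n}\uparrow\infty$ a.s.\ (path continuity of $B$ rules out a finite limit on a positive-probability set, as that would force $B$ to be unbounded on a bounded time interval there), $T_n\uparrow\infty$ a.s. On $[0,T_n]$ one has both $t\le n$ and, by continuity of $B$, $\gth_n\le B_t\le\gb_n$, so $|Y_t|\le\tfrac{Ln}{2}+\max(\gb_n,-\gth_n)=:K_n$; applying the continuous function $f^{-1}$ gives $\sup_{t\ge 0}|X_{t\wedge T_n}|\le\sup_{|y|\le K_n}|f^{-1}(y)|<\infty$, which is the assertion. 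In particular $X^{T_n}$ is then a bounded, hence uniformly integrable, martingale, so $(T_n)$ is a reducing sequence for $X$ made of $\mbf$ stopping times, which is exactly what Propositions~\ref{t4} and~\ref{t5} require.

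The one point needing care, and the reason for the extra factor $\wedge\,n$, is that the hitting times of $B$ through levels of $\gL$ confine $B$ only up to an a.s.\ finite but not uniformly bounded random horizon; without truncating at the deterministic time $n$, the drift $\tfrac12\int_0^t\gs'(X_s)\,ds$ in $Y$ would not be bounded by a constant on $[0,T_n]$, and one would obtain only path-by-path boundedness of $X^{T_n}$ instead of boundedness as a process. Truncation repairs this while preserving $T_n\uparrow\infty$; for this argument Lipschitz continuity of $\gs$ alone is what is used, the hypothesis that $\gs\gs'$ be Lipschitz not being needed here.
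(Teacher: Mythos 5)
Your argument is correct and, in fact, both more direct and more careful than the paper's own proof; the two are close in spirit but differ in execution. Both hinge on the Lamperti-type transform $h(y)=\int\frac{du}{\gs(u)}$, but the paper reaches it indirectly via Doss's representation of the drifted SDE $dY=\tfrac12\gs(Y)\gs'(Y)\,dt+\gs(Y)\,dB$ together with a Girsanov change of measure meant to relate $X$ to $Y$; you instead apply It\^o's formula directly to $f(X)$ and read off the pathwise identity $f(X_t)=B_t-\tfrac12\int_0^t\gs'(X_s)\,ds$. This is precisely the identity hidden inside the paper's argument, and the direct route has the merit of avoiding both Doss's theorem and the change of measure. (Incidentally, the Girsanov density written in the paper, $\mce\bigl(\int\tfrac12\gs(X)\gs'(X)\,dB\bigr)$, would produce the drift $\tfrac12\gs^2\gs'$ rather than $\tfrac12\gs\gs'$; the density should use $\tfrac12\gs'(X)$. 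Your computation sidesteps this entirely.)

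More importantly, you have identified a genuine gap in the paper's proof: the first-passage times of $|B|$ through the levels of $\gL$ bound the Brownian path but not the elapsed time, and since $|f(X_t)-B_t|\le\tfrac{L}{2}t$ the stopped process $X^{T_n}$ is not bounded by a constant unless $T_n$ itself is deterministically bounded. Under the paper's representation $X_t=h^{-1}(\tilde B_t+h(x))$ with the $Q$-Brownian motion $\tilde B$ the same issue reappears, because $\tilde B$ and $B$ differ by exactly that drift and the $T_n$ control $B$, not $\tilde B$. Your insertion of the factor $\wedge\,n$ is the correct and economical repair: it yields deterministic bounds $|f(X_{t\wedge T_n})|\le K_n$, costs nothing since the truncated times are still $\mbf$ stopping times increasing to infinity, and is exactly what is needed so that $X^{T_n}$ is a bounded (hence uniformly integrable) martingale, as required downstream in Theorem~\ref{t3}. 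Your closing observation that only Lipschitz continuity of $\gs$ is used here is also accurate; the hypothesis that $\gs\gs'$ be Lipschitz enters the paper's approach only through Doss's theorem and the Girsanov step, both of which your argument bypasses.
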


\begin{proof}
We let $R_n$ be the first passage time for $B$ of $\ga_n$, where $\ga_n$ are given in~\eqref{e2ter}. Next let $W=-B$ and let $S_n$ be the first passage time for $W$ of $\ga_n$. If we next let $T_n=R_n\wedge S_n$, we have $\sup_{s\leq T_n}\vert B_s\vert\leq\ga_n$, and hence is bounded.

We use an old theorem of H. Doss~\cite{Doss}. See also~\cite[Theorem 25, p. 289]{PP}. In particular it is shown in~\cite{PP} that if 
\bee\label{e7a}
dY_t=\frac{1}{2}\gs(Y_t)\gs^\prime(Y_t)dt+\gs(Y_t)dB_t; \quad Y_0\in\mbr,
\eee
then 
\bee\label{e7ter}
Y_t=h^{-1}(B_t+h(Y_0))
\eee
where 
$$
h(y)=\int_{y_0}^y\frac{1}{\gs(u)}du; \qquad \gep>0
$$
Since $\gs>0$ we have $h$ is strictly increasing, and therefore $h^{-1}$ is continuous; therefore our times $T_n$ also bound $Y$ as a consequence of the representation~\eqref{e7ter}. We next use a change of measure technique. 

We define $Q$ by letting $Z=\frac{dQ}{dP}$ where the martingale (in the $\mbg$ filtration) $Z_t=E(Z\vert\mcg_t)$ is given by
\bee
Z_t=1+\int_0^tZ_s(\frac{1}{2}\gs(X_s)\gs^\prime(X_s))dB_s
\eee
Then $Q$ is equivalent to $P$, and under $Q$, the process $X$ satisfies the SDE
$$
dX_t=\gs(X_t)dB_t+\frac{1}{2}\gs(X_t)\gs^\prime(X_t)ds
$$ 
by Girsanov's theorem. We use the sequence $(T_n)_{n\geq 1}$ of selected first passage times of $\vert B\vert$  to see that $X$ is locally bounded under $Q$ with stopping times in $\mbf$, and since $P\ll Q$ we have that $X$ is locally bounded under $P$ as well using the same stopping times $(T_n)_{n\geq 1}$.

\end{proof}

We now have established enough results to deduce the following:

\begin{theorem}\label{t3}
Let $B$ be a standard Brownian motion on its canonical space $(\Omega,\mcg,P,\mbg)$, and let $X$ be a solution of~\eqref{e5} satisfying~\eqref{e6} and~\eqref{e7}.  Let $\gL\subset [0,\infty)$ containing left isolated points such that $\sup\{\eta: \eta\in\gL\}=\infty$, and let $\mbf$ be the filtration generated by the first passage times of $B$ for every point in $\gL$, with $\mbf$ satisfying the usual hypotheses. Let $M$ be the projection of $X$ onto $\mbf$. That is, $M=^oX$, and for a fixed $t$ one has $^oX_t=E(X_t\vert\mcf_t)$. We take the c\`adl\`ag version of $M$. Then $M$ is a strict local martingale that jumps at every time $T_\gb$, for $\gb$ a left isolated point in $\gL$, and for $T_\gb$ the first passage time of $B$ at $\gb$.

\end{theorem}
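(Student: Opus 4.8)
The plan is to assemble the machinery already developed. By \eqref{e6}, \eqref{e7} and the Delbaen--Shirakawa criterion the solution $X$ of \eqref{e5} is a continuous, strictly positive, nonnegative strict local martingale on $(\Omega,\mcg,P,\mbg)$, and since a nonnegative local martingale is a supermartingale, $\parallel X\parallel_1=\sup_{T\in\mct}E(X_T)\le X_0<\infty$. Granting a reducing sequence for $X$ consisting of $\mbf$ stopping times, Theorem~11 of \cite{FP} gives that $M={}^oX$ is an $\mbf$ local martingale reduced by that sequence, and then Proposition~\ref{t5} (equivalently, the Krickeberg argument of Proposition~\ref{t4}, using that $X$ is already nonnegative with strictly decreasing expectation) upgrades this to: $M$ is a \emph{strict} local martingale in $\mbf$. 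Finally the jumps at the left isolated points of $\gL$ are located via the filtration jump of Propositions~\ref{t1} and~\ref{t2}.

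For the reducing sequence I would argue as in Theorem~\ref{t2bis}. Applying It\^o's formula to $h(X_t)$ with $h(y)=\int_1^y \tfrac{du}{\gs(u)}$ yields the representation
\[
h(X_t)=h(X_0)+B_t-\tfrac12\int_0^t \gs'(X_s)\,ds ,
\]
so that $X_t=h^{-1}\!\big(h(X_0)+B_t-\tfrac12\int_0^t\gs'(X_s)\,ds\big)$, which expresses $X$ through $B$. Localising $\gs$ (it is in $\mcc^2$ and strictly positive, hence bounded and Lipschitz on compact subsets of its domain, together with $\gs\gs'$) one may invoke the Doss representation and Girsanov change of measure of Theorem~\ref{t2bis}, and using the first passage times $R_n=\tau_{\ga_n}$ of $B$ at points $\ga_n\in\gL$ with $\ga_n\uparrow\infty$ (such exist since $\sup\gL=\infty$), obtain $\mbf$ stopping times $T_n\uparrow\infty$ along which $X$ is locally bounded, hence reduced. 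This is the first place real work is needed.

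For the jumps, fix a left isolated point $\gb\in\gL$, put $\ga_0=\sup\{x\in\gL:x<\gb\}$ and $S=\tau_{\ga_0}$. By Proposition~\ref{t1} the filtration $\mbf$ jumps from $S$ to $\tau_\gb$, and by Proposition~\ref{t2} — using that the first passage time of Brownian motion at $\gb$ has a continuous regular conditional law given $\mcf_S$ — the time $\tau_\gb$ is totally inaccessible in $\mbf$. On the stochastic interval between $S$ and $\tau_\gb$ the $\gs$ algebras $\mcf_t$ coincide with $\mcf_S$, so there $M_t=E(X_t\mid\mcf_S)$, and by path continuity of $X$ together with uniform integrability of $\{X_t:t\le \tau_\gb\}$ we get $M_{\tau_\gb-}=E(X_{\tau_\gb}\mid\mcf_S)$, while $M_{\tau_\gb}=E(X_{\tau_\gb}\mid\mcf_{\tau_\gb})$. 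Since $\gb$ is left isolated, $\mcf_{\tau_\gb}$ is generated by $\mcf_S$ together with the value of $\tau_\gb$, so
\[
\gD M_{\tau_\gb}=E\big(X_{\tau_\gb}\,\big|\,\mcf_S\vee\gs(\tau_\gb)\big)-E\big(X_{\tau_\gb}\,\big|\,\mcf_S\big),
\]
and it remains to see this is a.s.\ nonzero, i.e.\ that $u\mapsto E(X_{\tau_\gb}\mid\mcf_S,\tau_\gb=u)$ is non-constant. I would obtain this either by explicit computation in the model case $\gs(x)=x^2$ (inverse Bessel$(3)$), writing $X_{\tau_\gb}$ through the driving Bessel process, or in general by a monotonicity argument: conditioning the Brownian path from level $\ga_0$ to level $\gb$ to take longer forces, on average, strictly more of the strict local martingale mass of $X$ to escape to infinity, hence strictly decreases the conditional mean; as the conditional law of $\tau_\gb$ given $\mcf_S$ has no atoms, a strictly monotone function differs from its conditional mean almost surely, whence $\gD M_{\tau_\gb}\neq 0$ a.s.

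The two steps where genuine work is required are (i) the construction of the $\mbf$ reducing sequence — note that the strict local martingale condition \eqref{e7} pushes $\gs$ toward superlinear growth, which is in tension with the Lipschitz hypotheses of Theorem~\ref{t2bis}, so one must run a localised version of that argument off the scale function $h$ above rather than quote it verbatim — and (ii) the verification that $\gD M_{\tau_\gb}$ is genuinely nonzero, i.e.\ that the value of $\tau_\gb$ carries information about $X_{\tau_\gb}$ beyond $\mcf_S$. I expect (i) to be the main obstacle; the monotonicity argument sketched above is the route I would take for (ii).
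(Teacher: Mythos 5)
Your proposal follows the same route as the paper: obtain a reducing sequence of $\mbf$ stopping times via the Doss/Girsanov argument of Theorem~\ref{t2bis}, use the decreasing expectation of the nonnegative strict local martingale $X$ to conclude $M={}^oX$ is strict (your default detour through the Krickeberg decomposition of Proposition~\ref{t5} is unnecessary here, as you yourself note), and then locate the jumps via Propositions~\ref{t1} and~\ref{t2}. So at the structural level there is no difference.

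Two of the worries you raise are genuine and are \emph{not} resolved in the paper's own proof, so flagging them is good judgment rather than over-caution. First, the Lipschitz tension: if $\gs$ is globally Lipschitz then $\gs(x)\le C(1+x)$, so $\int_\gep^\infty x\,\gs(x)^{-2}\,dx=\infty$ and condition~\eqref{e7} fails; thus the hypotheses of Theorem~\ref{t2bis} are literally incompatible with $X$ being a strict local martingale, and the paper's verbatim appeal to Theorem~\ref{t2bis} cannot stand as written. Your suggestion to run a localized version of the Doss representation off the scale function $h(y)=\int_1^y \gs(u)^{-1}\,du$ is exactly the kind of repair needed, though it is a nontrivial piece of work (the Girsanov density must still be shown to be a true martingale up to each $T_n$). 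Second, the paper asserts $\gD M_{T_\gb}=M_{T_\gb}-M_S$ but never verifies this is almost surely nonzero; the explicit jump $\gb-\ga_0$ computed in Proposition~\ref{t4} relies on $\mbf$ being generated by the crossing times of $X$ itself, whereas in Theorem~\ref{t3} the filtration comes from crossings of the driving $B$, so that computation does not transfer. Your monotonicity heuristic for (ii) is plausible but it is a sketch, not a proof, and you should not present it as closing the gap.

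One additional issue, which neither you nor the paper addresses: under the stated hypotheses of Theorem~\ref{t3}, $\gL\subset[0,\infty)$ and $\mbf$ is generated only by the \emph{upward} passage times $\tau_x$ of $B$. But the stopping times $T_n=R_n\wedge S_n$ produced by Theorem~\ref{t2bis} involve $S_n$, the passage time of $-B$ across $\ga_n$ (equivalently, $B$ reaching $-\ga_n$), which lives in the two-sided filtration~\eqref{e2bis} built from a set $\gL$ satisfying $\liminf\ga_n=-\infty$ (condition~\eqref{e2ter}). In the one-sided setting of Theorem~\ref{t3}, those $S_n$ need not be $\mbf$-stopping times at all, so the localizing sequence is not obviously adapted to $\mbf$. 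Any repair of the reducing-sequence step must confront this mismatch — either by exploiting the a priori lower bound $X>0$ (so that only an upper barrier for $X$ is needed, which may be obtainable from upward crossings of $B$ alone when, say, $\gs'\ge 0$), or by modifying the hypotheses on $\gL$ to make them two-sided as in Theorem~\ref{t2bis}.
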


\begin{proof}
Note that the notation $^oX$ refers to the optional projection of $X$ onto the filtration $\mbf$. We let $\ga_n$ be a sequence of points in $\gL$ such that $\ga_n\rightarrow\infty$, and let $R_n, S_n$ and $T_n=R_n\wedge S_n$ be as given in the proof of Theorem~\ref{t2bis}, and then by this same Theorem~\ref{t2bis} we have that $X_{t\wedge T_n}$ is bounded. But if it is bounded, so also is its optional projection, since the times $(T_n)_{n\geq 1}$ are not just $\mbg$ stopping times, but are also $\mbf$ stopping times. Therefore $M=^oX$ is locally bounded by the $\mbf$ stopping times $T_n$.

We observe that $E(X_t)=E(M_t)$ for each $t\geq 0$, and that $t\mapsto E(X_t)$ is decreasing because $X$ is a nonnegative strict local martingale; therefore $t\mapsto E(M_t)$ is also decreasing, and hence $M$ cannot be a martingale, and is therefore a strict local martingale. 

It remains to observe that we know the jump times of $M$  are the totally inaccessible times of $\mbf$. This is by Theorem~\ref{t2}. We have that $\gD M_{T_\gb}=M_{T_\gb}-M_{T_\gb-}=M_{T_\gb}-M_S$, where $S=\sup_{\ga\in\gL}\{T_\ga: T_\ga<T_\gb\}$, since the filtration $\mbf$ jumps from $S$ to $T_\gb$ by Theorem~\ref{t1}. 
\end{proof}

In Theorem~\ref{t3} we let $X$ be the solution of the SDE~\eqref{e5} and we projected it onto $\mbf$, and we let $M=^oX$. $M$ is a local martingale, and thus it can written uniquely as
\begin{equation}\label{e14}
M=M^c+M^d
\end{equation}
where $M^c$ denotes its continuous part and $M^d$ denotes its ``purely discontinuous'' part. This was originally shown by C. Yoeurp~\cite{Yoeurp}. We then know that if $M$ jumps at a stopping time $R$, the process $1_{\{t\geq R\}}$ has a compensator $A^R_t$ such that $1_{\{t\geq R\}}-A^R_t$ is a martingale, and that if $M^d$ is locally square integrable it can be written as a compensated sum of jumps (cf~\cite[p. 266]{Meyer}), so that:
\begin{equation}\label{e15}
M^d_t=\sum_{\gb\text{ left isolated in }\gL}\left(\gD M_{T_\gb}1_{\{t\geq T_\gb\}}-C^{\gb}_t\right)
\end{equation}
where $C^{\gb}_t$ is the compensator of the process $\gD M_{T_\gb}1_{\{t\geq T_\gb\}}$. 

As was indicated in~\cite{JMP} it is of interest to give conditions when the compensator process $A^\gb$ has absolutely continuous paths; that is, when $A^\gb$ is of the form $A^\gb_t=\int_0^th(\gb)_sds$, where $h(\gb)$ is an adapted stochastic processes, \emph{a priori} changing with each time $T_\gb$. In this case the process $h(\gb)$ has a natural interpretation as an instantaneous (stochastic) relative likelihood that a jump will occur in the time interval $(t,t+dt)$. It is a common assumption in research papers that such compensators are indeed absolutely continuous, but it is rather hard in practice to prove that they indeed are. 

We wish to give conditions that ensure that the compensators of the time $T_\gb$ are indeed absolutely continuous. Note that this will also imply that the compensators $C^\gb$ of equation~\eqref{e15} are also absolutely continuous. To see this, we give a proof in the locally bounded case, and then by implicit localization, we assume it is bounded. Suppose $H=\gD M_T$ is bounded in $L^\infty$. Let $b\mcf_s$ denote the bounded $\mcf_s$ measurable random variables. We then have, for $J\in b\mcf_s$ and $s<t$: 
\begin{eqnarray}\label{e15bis}
\vert E((C^\beta_t-C^\beta_s)J))\vert&=&\vert E(J(H1_{\{t\geq T\}}-H1_{\{s\geq T\}}))\vert=\vert E(JH1_{\{s<T\leq t\}})\vert\\
&\leq& \norm H\norm_{L^\infty}\vert E(J1_{\{s\leq T\leq t\}})\vert\leq \norm H\norm_{L^\infty} E(J\int_s^t\vert h(\gb)_r\vert dr)\notag
\end{eqnarray}
and then it remains only to apply Yan Zeng's extension of the Ethier-Kurtz Criterion for the absolute continuity of compensators (see~\cite[Theorem 2]{JMP}). Note that two applications of the Cauchy-Schwarz inequality and using almost the same argument proves the case for $H=\gD M_T$ locally in $L^2$.

We begin with a preliminary result, which uses special properties of Brownian motion.

\begin{theorem}\label{t6}
For a Brownian motion $B$ on the space $(\Omega,\mcg,P,\mbg)$ and a set $\gL\subset\mbr_+$, we let $T_\gb$ be the first passage time of $B$ for a level $\gb\in\gL$. We define $\mbf$ as we did before (ie, see~\eqref{e2}). For a left isolated point $\gb\in\gL$ we have that the compensator of $T_\gb$ is absolutely continuous. That is, there exists an $\mbf$ adapted  process $\gl^\gb=(\gl^\gb_s)_{s\geq 0}$ such that $1_{\{t\geq T_\gb\}}-\int_0^t\gl^\gb_sds$ is an $\mbf$ martingale. 
\end{theorem}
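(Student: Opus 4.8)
The plan is to use the explicit form of the compensator from Proposition~\ref{t2} (formula~\eqref{e3}) together with the strong Markov property of Brownian motion, reducing everything to a statement about the regular conditional law of a first passage time given the current position of $B$. Since $\gb$ is left isolated in $\gL$, we have $\ga_0 := \sup\{x\in\gL : x<\gb\}<\gb$, the associated stopping time is $S=T_{\ga_0}$ (the first passage of $B$ to $\ga_0$), and $\mbf$ jumps from $S$ to $T_\gb$; hence on $(S,T_\gb]$ the compensator is $\int_0^t 1_{(S,T_\gb]}(u)\,\frac{P(T_\gb\in du\mid\mcf_S)}{P(T_\gb\ge u\mid\mcf_S)}$. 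So it suffices to show $u\mapsto P(T_\gb\le u\mid\mcf_S)$ is absolutely continuous with a density that is jointly nice enough to assemble into an adapted process $\gl^\gb$.

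First I would identify the conditional law. By the strong Markov property at $S=T_{\ga_0}$, given $\mcf_S$ the process $B_{S+\cdot}-\ga_0$ is a Brownian motion started at $0$ independent of $\mcf_S$, and $T_\gb = S + \inf\{t>0 : B_{S+t}\ge\gb\}$, so $T_\gb - S$ is, conditionally on $\mcf_S$, distributed as the first passage time of a standard Brownian motion to level $\gb-\ga_0>0$. This has the classical density (the L\'evy–Smirnov / inverse-Gaussian-type density) $f_{\gb-\ga_0}(t)=\frac{\gb-\ga_0}{\sqrt{2\pi t^3}}\exp\!\big(-\tfrac{(\gb-\ga_0)^2}{2t}\big)$ for $t>0$, which is smooth and strictly positive on $(0,\infty)$. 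Therefore, on the event $\{S\le u<T_\gb\}$, we get $P(T_\gb\in du\mid\mcf_S)=f_{\gb-\ga_0}(u-S)\,du$ and $P(T_\gb\ge u\mid\mcf_S)=\int_{u-S}^\infty f_{\gb-\ga_0}(r)\,dr=:\overline F_{\gb-\ga_0}(u-S)>0$, so the candidate is
\[
\gl^\gb_u = 1_{(S,T_\gb]}(u)\,\frac{f_{\gb-\ga_0}(u-S)}{\overline F_{\gb-\ga_0}(u-S)}.
\]
Next I would check this is $\mbf$-adapted: $S=T_{\ga_0}$ is an $\mbf$-stopping time, $1_{(S,T_\gb]}(u)$ is $\mcf_u$-measurable, and the ratio is a deterministic (continuous, since $\gb-\ga_0$ is a fixed constant) function of $u-S$, hence $\mcf_u$-measurable; integrability on $[0,t]$ follows because on $(S,T_\gb]$ we have $u-S$ bounded away from $0$ only up to... actually one must be slightly careful near $u\downarrow S$, but $f/\overline F$ has a finite (in fact zero) limit as $t\downarrow 0$ since $f_{\gb-\ga_0}(t)\to 0$ faster than any polynomial, so the density is bounded on compacts, giving $\int_0^t|\gl^\gb_u|\,du<\infty$. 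Finally, plugging into~\eqref{e3} and observing $\frac{d}{du}\big(-\log\overline F_{\gb-\ga_0}(u-S)\big)=\gl^\gb_u$ on $(S,T_\gb]$ shows $\int_0^{t\wedge T_\gb}\gl^\gb_u\,du$ equals the compensator of $1_{\{t\ge T_\gb\}}$, so $1_{\{t\ge T_\gb\}}-\int_0^t\gl^\gb_s\,ds$ is an $\mbf$-martingale.

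The main obstacle I anticipate is not the computation of the Brownian first-passage density but the justification that the \emph{regular conditional distribution} of $T_\gb$ given $\mcf_S$ is genuinely the unconditional first-passage law — i.e. a clean invocation of the strong Markov property that accounts for the fact that $\mcf_S$ is the $S$-stopped version of the \emph{shrunken} filtration generated only by the passage times in $\gL$, not the full Brownian filtration $\mcg_S$. One has to argue that conditioning on $\mcf_S\subset\mcg_S$ only coarsens the information, and since the post-$S$ increment is independent of $\mcg_S$ (hence of $\mcf_S$) with the stated law, the regular conditional distribution is the deterministic law $f_{\gb-\ga_0}$; this is where I would spend the most care, possibly citing the tower property and the description of $\mcf_S$ afforded by Proposition~\ref{t1}. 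A secondary technical point is confirming that formula~\eqref{e3} applies verbatim here even though $\mbf$ is defined via~\eqref{e2} with both up- and down-crossings in~\eqref{e2bis}; but since $\gb>0$ is approached from below and $S$ is the relevant predecessor passage time, the jump-of-filtration structure from Proposition~\ref{t1} and the compensator formula carry over unchanged.
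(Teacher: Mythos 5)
Your proposal is correct and reaches the same intensity $\gl^\gb_u = 1_{(S,T_\gb]}(u)\,f_{\gb-\ga_0}(u-S)/\overline F_{\gb-\ga_0}(u-S)$ as equation~\eqref{e20} in the paper (modulo an evident typo there where an extra $h$ appears in the denominator), but you take a noticeably shorter route. You invoke the abstract compensator identity~\eqref{e3} as a black box (it is cited in Proposition~\ref{t2} from Zeng's thesis), combined with the strong Markov property of $B$ at the stopping time $S=T_{\ga_0}$ and the inclusion $\mcf_S\subset\mcg_S$ to identify the regular conditional law of $T_\gb-S$ given $\mcf_S$ with the deterministic first-passage law. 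The paper instead avoids~\eqref{e3}: it introduces the intermediate filtration $\mce_t=\gs(\tau_x\leq s,\, s\leq t,\, x\in\gL\cap[0,\gb])$, proves $\gs(\mcf_t\cap\{T_\gb>t\})=\gs(\mce_t\cap\{T_\gb>t\})$ by monotone class to strip out the a priori irrelevant information about levels above $\gb$ on the pre-$T_\gb$ event, and then uses the stationary independent increments of the level-passage process $(T_x)_{x\geq 0}$ (which is the same underlying fact you are using, repackaged as the L\'evy property of the Brownian inverse passage-time subordinator) to compute $E(N_{t+h}-N_t\mid\mcf_t)/h$ directly and pass to the limit, before appealing to Aven / Ethier--Kurtz. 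What your route buys is concision and a cleaner conceptual picture (the whole point is that conditionally on $\mcf_S$ the residual passage time is the unconditional one); what the paper's route buys is self-containment (not leaning on~\eqref{e3}), an explicit treatment of the $t<T_{\ga_0}$ piece (the $1_{[0,T_\ga)}(t)$ term, whose limiting intensity contribution is zero), and a derivation that makes transparent exactly which part of $\mcf_t$ matters. Two small clarifications to your write-up: the theorem as stated uses $\mbf$ defined by~\eqref{e2} (up-crossings only), so your worry about~\eqref{e2bis} is not in force here; and you should note explicitly that $S$ is $\mcf_S$-measurable (a general fact about stopped $\sigma$-algebras) so that $f_{\gb-\ga_0}(u-S)$ is genuinely the $\mcf_S$-conditional density of $T_\gb$ at $u$. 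Finally, like the paper, you ultimately need an Aven-type verification that the pointwise limit of $h^{-1}P(t<T_\gb\leq t+h\mid\mcf_t)$ really is the compensator density; the paper acknowledges and omits this, and you should acknowledge it too rather than presenting the identification as immediate from plugging into~\eqref{e3}.
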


\begin{proof}
We fix a left isolated point $\gb\in\gL$, and we let $N_t=1_{t\geq T_\gb}$. We let $\mbe\subset\mbf\subset\mbg$ be the filtration 
$$
\mce_t=\gs(\tau_x\leq s, s\leq t,x\in\gL\cap[0,\gb]).
$$
It is simple to show via an application of the monotone class theorem that 
\begin{equation}\label{e16}
\gs(\mcf_t\cap\{T_\gb>t\})=\gs(\mce_t\cap\{T_\gb>t\}).
\end{equation}
By another application of the monotone class theorem we have the following identity:
\begin{equation}\label{e17}
P(T_\gb>t+h\vert\gs(\mce_t\cap\{T_\gb>t\})P(T_\gb>t\vert\mce_t)=P(T_\gb>t+h\vert\mce_t)1_{\{T_\gb>t\}}
\end{equation}
For our fixed point $\gb\in\gL$ we let $\ga=\sup\{x\in\gL: x<\gb\}$ with the convention that $\sup\emptyset=0$. It follows that $T_\ga=\sup_{x\in\gL\cap [0,\gb]}T_x$ and is a stopping time for the $\mbg$ filtration. We also observe that 
$$
\mce_t\subset\vee_{x\in(\gL\cap [0,\gb])}\mcg_{T_x\wedge t}\subset\mcg_{T_\ga\wedge t}\subset\mcg_{T_\ga}.
$$
Therefore we have 
$$
1_{\{T_\gb>t\}}=E(E(1_{\{T_\gb>t\}}\vert\mcg_{T_\ga})\mce_t).
$$
Next we use that the process $(T_x)_{x\geq 0}$ has independent and stationary increments for the time changed original filtration $\mcg_{t_x}$ to get
\begin{eqnarray}\label{e17}
E(E(1_{\{T_\gb>t\}}\vert\mcg_{T_\ga})\vert\mce_t)&=&E(E(1_{\{T_\gb-T_\ga>t-T_\ga\}}\vert\mcg_{T_\ga})\vert\mce_t)\notag\\
&=&E(1-F_{T_\gb-T_\ga}(t-T_\ga)\vert\mce_t)\notag\\
&=&1_{\{T_\ga>t\}}+\{1-F_{T_\gb-T\ga}(t-T_\ga)\}1_{\{T_\ga\leq t\}}
\end{eqnarray}
where $F_{T_\gb-T_\ga}(x)=F_{T_{\gb-\ga}}(x)$ is the distribution function of $T_\gb-T_\ga$, and moreover it equals $\int_0^xf_{\gb-\ga}(u)du$, where
\begin{equation}\label{e17bis}
f_\gamma(u)=\gamma(2\pi u^3)^{-\frac{1}{2}}\exp(-\gamma^2/2u).
\end{equation}
An analogous calculation for $P(T_\gb>t+h\vert\mce_t)$ can be used to show that 
\begin{eqnarray}\label{e18}
P(T_\gb>t+h\vert\mce_t)&=&P(1-F_{T_{\gb-\ga}}(t+h-T_\ga)\vert\mce_t)\notag\\
&=&\{1-F_{T_{\gb-\ga}}(t+h-T_\ga)\}1_{\{T_\ga\leq t\}}\notag\\
&&+E(1-F_{T_{\gb-\ga}}(t+h-T_\ga)\vert\mce_t)1_{\{T_\ga>t\}}
\end{eqnarray}
Using~\eqref{e17} and~\eqref{e18} we obtain
\begin{eqnarray*}\label{e19}
E(N_{t+h}-N_t\vert\mcf_t)&=&\frac{F_{T_{\gb-\ga}}(t+h-T_\ga)-F_{T_{\gb-\ga}}(t-T_\ga)}{1-F_{T_{\gb-\ga}}(t+h-T_\ga)}1_{[T_\ga,T_\gb)}(t)\\
&&+E(F_{T_{\gb-\ga}}(t+h-T_\ga)\vert\mce_t)1_{[0,T_ga)}(t)
\end{eqnarray*}
The last term above is at most $F_{T_{\gb-\ga}}(h)1_{[0,T_\ga)}(t)$, and hence if we divide by $h$ and let $h$ tend to 0 it converges to its density at 0 which is 0 by~\eqref{e17bis}. Therefore we have that
\begin{equation}\label{e20}
\frac{1}{h}P(t+h\geq T_\gb>t\vert\mcf_t)\rightarrow\frac{f_{\gb-\ga}(t-T_\ga)}{1-F_{T_{\gb-\ga}}(t+h-T_\ga)}1_{[T_\ga,T_\gb)}(t)\equiv\gl_t
\end{equation}
as $h\rightarrow 0$. It is clear that the process $\gl$ is adapted to $\mbf$. Given this convergence, we have our candidate $\gl$ for the random intensity. To show that it actually is such, we need only to verify (for example) the hypotheses of a result of T. Aven~\cite{Aven}, or other results such as the theorem of Ethier-Kurtz~\cite{EK}, or alternatively see~\cite{JMP}. These are straightforward calculations and we omit them. 
\end{proof}

The following lemma is doubtless well known, but we do not know where there is a proof in the literature, so we include it here. Let $R$ and $T$ be two stopping times such that $P(R=T)=0$. Suppose they both have absolutely continuous compensators, and let
\begin{equation}\label{e20bis}
M_t=1_{\{t\geq T\}}-\int_0^{t\wedge T}\gl_sds\text{ and }N_t=1_{\{t\geq R\}}-\int_0^{t\wedge R}\mu_sds
\end{equation}
be the two martingales; by an abuse of language we say that the compensator intensity of $T$ is the process $\gl=(\gl_t)_{0\leq t\leq t\wedge T}$.

\begin{lemma}\label{l1}
Let $T,R$ be two stopping times with $P(T=R)=0$ with compensator intensities $\gl$ and $\mu$. Then the compensator intensity of the stopping time $S=T\wedge R$ is $\gl+\mu$. That is,
\bee\label{e20ter}
1_{\{t\geq T\wedge R\}}-\int_0^{t\wedge T\wedge R}(\gl_s+\mu_s)ds\quad \text {is a martingale}.
\eee
\end{lemma}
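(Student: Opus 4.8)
I would prove the lemma by reducing it to a pathwise product identity and then reading off the predictable drift.

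\emph{Setup.} The plan is to start from the elementary pathwise identity
$$1_{\{t\geq T\wedge R\}}=1_{\{t\geq T\}}+1_{\{t\geq R\}}-1_{\{t\geq T\}}1_{\{t\geq R\}}.$$
Write $A_t=1_{\{t\geq T\}}$ and $B_t=1_{\{t\geq R\}}$. By hypothesis $A$ and $B$ are finite–variation semimartingales with canonical decompositions
$$A_t=M_t+\int_0^t\gl_s1_{\{s\leq T\}}\,ds,\qquad B_t=N_t+\int_0^t\mu_s1_{\{s\leq R\}}\,ds,$$
where $M$ and $N$ are martingales. Using the martingale property of $M$ one gets $E\big(\int_0^{t\wedge T}\gl_s\,ds\big)=P(0<T\leq t)\leq1$, so $M$, and likewise $N$, has integrable total variation on $[0,\infty)$; I will want this to make sure that certain stochastic integrals below are genuine martingales and not merely local ones.

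\emph{Main step.} Next I would apply integration by parts to the product,
$$A_tB_t=\int_0^tA_{s-}\,dB_s+\int_0^tB_{s-}\,dA_s+[A,B]_t,$$
where $A_0B_0=1_{\{T=0,\,R=0\}}=0$ a.s.\ since that event lies in $\{T=R\}$. The one place the hypothesis $P(T=R)=0$ is essential is the vanishing of the covariation: $A$ jumps only at $T$ and $B$ only at $R$, so $[A,B]_t=\sum_{s\leq t}\gD A_s\,\gD B_s=0$ a.s. Substituting the decompositions of $A$ and $B$, and noting that $A_{s-}=1_{\{s>T\}}$ and $B_{s-}=1_{\{s>R\}}$ are bounded and predictable, I would collect the terms into
$$1_{\{t\geq T\wedge R\}}=\Big(M_t+N_t-\int_0^tA_{s-}\,dN_s-\int_0^tB_{s-}\,dM_s\Big)+\int_0^t\gl_s1_{\{s\leq T\}}(1-B_{s-})\,ds+\int_0^t\mu_s1_{\{s\leq R\}}(1-A_{s-})\,ds,$$
in which the parenthesized process is a martingale, being a sum of stochastic integrals of bounded predictable processes against the integrable–variation martingales $M$ and $N$.

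\emph{Conclusion.} To finish I would simplify the absolutely continuous part: for Lebesgue–a.e.\ $s$ one has $1-B_{s-}=1_{\{s\leq R\}}$ and $1-A_{s-}=1_{\{s\leq T\}}$, so the two $ds$–integrals combine into $\int_0^t(\gl_s+\mu_s)1_{\{s\leq T\wedge R\}}\,ds=\int_0^{t\wedge T\wedge R}(\gl_s+\mu_s)\,ds$. Hence $1_{\{t\geq T\wedge R\}}-\int_0^{t\wedge T\wedge R}(\gl_s+\mu_s)\,ds$ coincides with the martingale in parentheses, which is the assertion. The main point requiring care is the martingale (not merely local martingale) property of $\int A_{s-}\,dN_s$ and $\int B_{s-}\,dM_s$; I expect to handle this via the integrability of the total variation of $M$ and $N$ recorded above, or else by localizing along $R_k=\inf\{t:\int_0^t|dM_s|+\int_0^t|dN_s|\geq k\}$ and letting $k\to\infty$. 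Equivalently, the whole computation can be phrased in terms of dual predictable projections, where the compensator of $A_tB_t$ is $\int_0^tA_{s-}\mu_s1_{\{s\leq R\}}\,ds+\int_0^tB_{s-}\gl_s1_{\{s\leq T\}}\,ds$ and the same cancellation occurs against the compensators of $A$ and $B$.
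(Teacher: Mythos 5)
Your proof is correct, but it takes a genuinely different route from the paper's. The paper's argument is a one‑line stopping argument: it forms the martingale $M+N$ and stops it at $T\wedge R$, so that
$$
(M+N)^{T\wedge R}_t = 1_{\{t\wedge T\wedge R\geq T\}}+1_{\{t\wedge T\wedge R\geq R\}} - \int_0^{t\wedge T\wedge R}(\gl_s+\mu_s)\,ds,
$$
and then observes that on $\{T\neq R\}$ (i.e., a.s.) the two indicators sum to exactly $1_{\{t\geq T\wedge R\}}$ — the hypothesis $P(T=R)=0$ is used precisely to rule out a jump of size $2$ at the time $T\wedge R$. Since stopping a martingale at a stopping time yields a martingale, the conclusion is immediate, with no integrability to check beyond what is already in the hypothesis. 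You instead start from the inclusion–exclusion identity $1_{\{t\geq T\wedge R\}}=A_t+B_t-A_tB_t$ and apply integration by parts to $A_tB_t$; there the role of $P(T=R)=0$ is to kill $[A,B]$ and the initial product $A_0B_0$. Your route requires an extra step — verifying that the integrals $\int A_{s-}\,dN_s$ and $\int B_{s-}\,dM_s$ are genuine (not merely local) martingales — which you handle correctly via the integrable‑variation bound $E\!\int_0^{t\wedge T}\gl_s\,ds\leq 1$. That extra care is a virtue of your write‑up, but it is also exactly the bookkeeping that the paper's stopping trick avoids. Conversely, your integration‑by‑parts argument generalizes more transparently to compensators of products of counting processes, and it makes visible where the covariation would contribute if $P(T=R)>0$, whereas the paper handles that case only with a closing remark about a jump of size $1+1_{\{T=R<\infty\}}$.
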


\begin{proof}
Let $M$ and $N$ be as given in~\eqref{e20bis}. Then 
$$
(M+N)_t=1_{\{t\geq T\}}+1_{\{t\geq R\}}-\left(\int_0^{t\wedge T}\gl_sds+\int_0^{t\wedge R}\mu_sds\right)
$$
Stopping $M+N$ at the time $T\wedge R$ gives us
$$
(M+N)^{T\wedge R}_t=1_{\{t\geq T\wedge R\}}-\left(\int_0^{t\wedge T\wedge R}(\gl_s+\mu_s)ds\right)
$$ 
if $P(T=R)=0$. Otherwise there is a jump at time $T\wedge R$ and it equals $1+1_{\{T=R<\infty\}}$. 
\end{proof}

We want to use the results of Theorem~\ref{t6} to give relatively explicit examples of strict local martingales with jumps, obtained via this projection method. We return to the family of continuous strict local martingales of the form
\bee\label{21}
dX_t=\gs(X_t)dB_t,\qquad X_0=1,
\eee
where $B$ is standard Brownian motion and we assume  the conditions~\eqref{e6} and~\eqref{e7}. That is, we assume $\int_0^\gep\frac{x}{\gs(x)^2}ds=\infty$ and $\int_\gep^\infty\frac{x}{\gs(x)^2}ds<\infty$ for some $\gep>0$. We will prove the following result. 

\begin{theorem}\label{t7}
Assume given a Brownian motion $B$ on the space $(\Omega,\mcg,P,\mbg)$ and a set $\gL\subset\mbr_+$, such that there exists at least one sequence of left isolated points increasing to $\infty$ and at least one other sequence of right isolated points tending to $-\infty$. We define $\mbf$ as we did before as in~\eqref{e2}. Let $X$ be the solution of~\eqref{21} with  conditions~\eqref{e6} and~\eqref{e7} holding. Moreover assume $\gs>0$, $\gs\in\mcc^2$, and both of $\gs$ and $\gs\gs^\prime$ are Lipschitz continuous. Let $M=^oX$ be its projection onto $\mbf$. Then $M$ is a strict local martingale, and its jumps have absolutely continuous compensators. 
\end{theorem}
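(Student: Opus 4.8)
The plan is to assemble the theorem from the results already in hand, the one genuinely new ingredient being a two-sided version of Theorem~\ref{t6}; I begin with the claim that $M$ is a strict local martingale. The hypotheses on $\gs$ ($\gs>0$, $\gs\in\mcc^2$, and $\gs$, $\gs\gs^\prime$ Lipschitz) and on $\gL$ (isolated points running off to $+\infty$ and to $-\infty$) are exactly those of Theorem~\ref{t2bis}, so that theorem furnishes $\mbf$-stopping times $T_n\uparrow\infty$ with each $X^{T_n}$ bounded; a bounded local martingale is a uniformly integrable martingale, so $(T_n)$ reduces $X$ in $\mbg$ and lies in $\mbf$, and Theorem~11 of~\cite{FP} gives that $M={}^oX$ is an $\mbf$-local martingale, locally bounded along the same $(T_n)$. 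Since $X$ solves~\eqref{21} under~\eqref{e6}--\eqref{e7} it is a nonnegative strict local martingale, so $t\mapsto\E(X_t)$ is decreasing and non-constant; as $\E(M_t)=\E\bigl(\E(X_t\vert\mcf_t)\bigr)=\E(X_t)$, the same holds for $M$, which therefore cannot be a martingale. This part is precisely Theorem~\ref{t3} (equivalently Proposition~\ref{t5}), and I would simply invoke it.

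Next I would locate the jumps. Applying Proposition~\ref{t2} to $B$ and, in mirror image, to $-B$, the totally inaccessible $\mbf$-stopping times are exactly the up-crossing times $\tau_\gb$ with $\gb$ left isolated in $\gL$ together with the down-crossing times $\nu_\gb$ with $\gb$ right isolated; by the structure of $\mbf$ (Propositions~\ref{t1}--\ref{t2}) one has $\mcf_{T-}=\mcf_T$ up to null sets at every predictable $\mbf$-time $T$, so the local martingale $M$ does not jump at such $T$ (and accessible jump times are ruled out the same way), whence the jump times of $M$ lie among these $\tau_\gb$ and $\nu_\gb$. It therefore suffices to show that the $\mbf$-compensator of $1_{\{t\ge\tau_\gb\}}$ is absolutely continuous for $\gb$ left isolated (the $\nu_\gb$ case being the mirror image), since then the estimate at~\eqref{e15bis} makes the compensators $C^\gb$ of the jumps of $M^d$ absolutely continuous as well.

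The core is thus an extension of Theorem~\ref{t6} to the filtration $\mbf$ of the present statement which, as the use of Theorem~\ref{t2bis} already forces, records first passages of $B$ to levels on \emph{both} sides of the origin, so Theorem~\ref{t6} does not apply verbatim. The key fact I would prove is that, for Brownian motion and this $\mbf$, the regular conditional distribution of $B_t$ given $\mcf_t$ is almost surely absolutely continuous: between consecutive $\mbf$-passage events $\mbf$ is ``frozen'' in the sense of Proposition~\ref{t1}, and conditionally on the passage history up to $t$ --- which records the last passage level $\ell_0\in\gL$ at a time $\theta\le t$ and an interval $(\ell_-,\ell_+)$ bounded by $\gL$-levels (with $\ell_+\le\gb$ on $\{\tau_\gb>t\}$) in which $B$ has remained since $\theta$ --- the strong Markov property at $\theta$ exhibits $B_t$ as $\ell_0$ plus a Brownian motion on $[0,t-\theta]$ killed on exiting $(\ell_--\ell_0,\ell_+-\ell_0)$ and conditioned to survive, an absolutely continuous law with smooth density vanishing at the endpoints. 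Granting this, on $\{\tau_\gb>t\}$ the position $B_t$ has an absolutely continuous conditional law $\mu_t$ on $(-\infty,\gb)$, and since $\tau_\gb-t$ is then the first passage of a fresh Brownian motion started from $B_t$ to the level $\gb$, the conditional law of $\tau_\gb$ given $\mcf_t$ is the measure with density $s\mapsto\int f_{\gb-v}(s-t)\,\mu_t(dv)$ on $(t,\infty)$, hence absolutely continuous; one then shows that $\tfrac1hP(t<\tau_\gb\le t+h\vert\mcf_t)$ converges as $h\downarrow0$ to an $\mbf$-adapted, locally bounded process $\gl^\gb_t$ (in essence the one-sided boundary derivative of the density of $\mu_t$ at $\gb$), the part of this conditional probability carried by $\{S>t\}$, with $S$ the $\mbf$-passage time immediately before $\tau_\gb$, being negligible --- there $\{t<\tau_\gb\le t+h\}\subset\{t<S\le t+h\}$, and by the strong Markov property at $S$ it is at most $F_{T_{c_0}}(h)=o(h)$ with $c_0=\gb-\sup\{x\in\gL:x<\gb\}>0$ by~\eqref{e17bis}. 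Finally I would verify the hypotheses of Aven's theorem~\cite{Aven} (or of Ethier--Kurtz~\cite{EK}, or of~\cite[Theorem 2]{JMP}) --- the same routine estimates as in Theorem~\ref{t6} --- to conclude that $1_{\{t\ge\tau_\gb\}}-\int_0^t\gl^\gb_s\,ds$ is an $\mbf$-martingale.

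The main obstacle is exactly this last step. In Theorem~\ref{t6} the up-crossing process $(\tau_x)_{x\ge0}$ of $B$ is a stable subordinator with independent increments, which lets one reduce the conditioning to the $\mbg$-history up to the preceding up-crossing and turn everything into a one-line computation; once $\mbf$ also records the down-crossings, the two families of passage times are no longer jointly independent, a down-crossing can occur after the preceding up-crossing, and that reduction is unavailable. The remedy is the absolute-continuity statement for $B_t\vert\mcf_t$ above, which holds precisely because the down-crossing levels sit on the far side of $\gb$, so that conditioning on the Brownian path's excursions down to them still leaves an absolutely continuous law for its current position; everything else --- measurability and local boundedness of $\gl^\gb$, the domination required by Aven's criterion, and the mirror-image argument for the $\nu_\gb$ --- is routine, and I would present it only in outline, in keeping with the style of the proof of Theorem~\ref{t6}.
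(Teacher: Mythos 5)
Your opening paragraph — Theorem~\ref{t2bis} for the reducing sequence, Theorem 11 of \cite{FP} for the local martingale property of $M={}^oX$, and the decreasing-expectation argument for strictness — is exactly the paper's argument, and your second paragraph identifying the totally inaccessible times via Proposition~\ref{t2} applied to $B$ and $-B$ is implicit in the paper as well. Where you depart from the paper, and decisively so, is on the absolute continuity of the jump-time compensators. The paper's own proof simply invokes Theorem~\ref{t6} for $B$ and again for $W=-B$ and then combines via Lemma~\ref{l1}; but Theorem~\ref{t6} is stated and proved for the \emph{one-sided} filtration of~\eqref{e2}, and its proof hinges on the identity $\gs(\mcf_t\cap\{T_\gb>t\})=\gs(\mce_t\cap\{T_\gb>t\})$ together with the independent-increments structure of $(T_x)_{x\geq 0}$ under the time-changed filtration — both of which fail once $\mcf_t$ also records down-crossings, since on $\{T_\gb>t\}$ the $\gs$-field $\mcf_t$ may then carry down-crossing information not captured by the up-crossing history $\mce_t$. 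Lemma~\ref{l1} cannot bridge this, because it already requires that $T$ and $R$ have absolutely continuous compensators with respect to the \emph{same} two-sided $\mbf$, which is precisely what Theorem~\ref{t6} alone does not deliver (compensators are not stable under filtration enlargement). You are right to flag this, and your remedy — prove that the regular conditional law of $B_t$ given $\mcf_t$ is absolutely continuous by expressing it, via the strong Markov property at the last observed passage, as a conditioned Brownian law on a known open interval, and then convolve with the one-sided first-passage density $f_{\gb-v}$ to get an absolutely continuous conditional law for $\tau_\gb$ before passing to Aven's criterion — is a genuinely different key lemma that actually engages with the two-sided $\mbf$. What the paper's route buys is brevity; what yours buys is that the argument is actually valid for the filtration being used.

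Two places in your sketch should be tightened before it counts as a proof of the two-sided analogue of Theorem~\ref{t6}. First, the description of the confining interval is slightly off: the last-crossed level $\ell_0$ may be either the running maximum or the running minimum of observed $\gL$-levels, so the endpoints $\ell_-,\ell_+$ of the excursion interval are determined by the min-so-far and max-so-far of crossed levels (respectively by the next uncrossed levels below and above those), not by $\ell_0$ alone; the conditioned Brownian law is then started from $\ell_0$ at time $\theta$ and killed on exiting $(\ell_-,\ell_+)$. Second, the passage from absolute continuity of the law of $\tau_\gb\mid\mcf_t$ to the existence and local boundedness of $\gl^\gb_t=\lim_{h\downarrow 0}\frac{1}{h}P(t<\tau_\gb\leq t+h\mid\mcf_t)$, and the domination needed for Aven's theorem, really do depend on quantitative control of the conditioned density near the boundary $\gb$; your observation that the $\{S>t\}$ contribution is $o(h)$ by~\eqref{e17bis} is the right mechanism, but the $\{S\leq t\}$ part needs the boundary derivative of the killed-Brownian density to be finite and measurable in $\om$, which is where the smoothness of the transition kernel of Brownian motion killed on an interval enters. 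Neither point is hard, but since this lemma is carrying the entire weight that the paper wrongly places on Theorem~\ref{t6}, it deserves to be spelled out rather than sketched.
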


\begin{proof}
We already have seen that $M$ is a strict local martingale, provided we can show it is a local martingale. And once this is shown, it follows from Theorem~\ref{t6} that the compensators of both the jumps and also of the jump times are absolutely continuous. To show $M$ is a local martingale, we need only to prove there exists a reducing sequence of stopping times. Let $\ga_n$ be a sequence of left isolated points in $\gL$ tending to $\infty$. Then $B_{t\wedge T_{\ga_n}}$ is bounded above by $n$, but it is not necessarily from below. We define $W=-B$ and we let $R_{\ga_n}$ be the corresponding first passage time for $W$. Then again by Theorem~\ref{t6} we have that the compensator of $R_{\ga_n}$ is absolutely continuous. Let us suppress the $n$. We know the existence of $\mbf$ adapted processes $\gl$ and $\mu$ such that $1_{\{t\geq T_\ga\}}-\int_0^t\gl_sds$ and $1_{\{t\geq R_\ga\}}-\int_0^t\mu_sds$ are martingales. But then by Lemma~\ref{l1} we have that the compensator of $T\wedge R$ is also absolutely continuous; indeed, by Lemma~\ref{l1} we even know that its form is $\int_0^{t\wedge T\wedge R}(\gl_s+\mu_s)ds$. 

From this we know that $S_{\ga_n}\equiv T_{\ga_n}\wedge R_{\ga_n}$ and that 
$$
S_{\ga_n}=\inf_{\{t>0\}}\{\vert B_t\vert\geq\ga_n\},
$$
and also that the times $(S_{\ga_n})_{n\geq 1}$ form an increasing sequence of  $\mbf$ totally inaccessible stopping times. The result now follows by Theorem~\ref{t2bis}.
\end{proof}

\subsection{A Family of Examples}\label{family}

We have worked out in some detail one example where we are able to analyze several features of the example, such as the structure of the compensators of the jumps of the local martingale. A more general family of examples can be constructed, following the same ideas, but with the cost of a slightly less explicit analysis of its features. 

We let $(\Omega,\mcg,P,\mbg)$ be a filtered complete probability space that supports a standard Brownian motion $B$. We let $\tau_1,\tau_2,\tau_3,\dots$ denote a sequence of stopping of stopping times with $\tau_i<\tau_{i+1}$ for each $i\geq 1$, and such that $\lim_{i\rightarrow\infty}\tau_i=\infty$. Moreover we assume that the times $\tau_i$ all have continuous distributions, and that there is a subsequence $\tau_{i_n}$ such that $\vert B_{t\wedge\tau_{i_n}}\vert\leq n$. Finally, let $H$ be an arbitrary predictable process such that the It\^o integral $\int_0^tH_sdB_s$ exists for each $t>0$. To eliminate trivialities, assume $H\not\equiv 0$.

\begin{theorem}\label{t8}
Let  $(\tau_i)_{i\geq 1}$ be a  sequence of stopping times, strictly increasing with continuous distributions, as defined above. Next let 
$$
J_t=\sum_{i=1}^\infty 1_{(\tau_{2i-1},\tau_{2i}]}(t)
$$
and define a subfiltration $\mbf$ of $\mbg$ by $\mcf_t=\gs(\int_0^sH_rJ_rdB_r; s\leq t)$. Then the filtration $\mbf$ ``jumps'' in the sense of Definition~\ref{d1} from $\mcf_{\tau_{2i}}$ to $\mcf_{\tau_{2i+1}}$, and the times $\tau_{2i-1}$ are totally inaccessible, for $i\geq 1$. Let $X$ be the solution of the stochastic differential equation given in~\eqref{e5},\eqref{e6},\eqref{e7}, and let $M=^oX$, the optional projection of $X$ onto $\mbf$. Then $M$ is a strict local martingale with totally inaccessible jumps at the times $(\tau_{2i-1})_{i\geq 1}$.
\end{theorem}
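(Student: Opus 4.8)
The plan is to follow the template of Theorems~\ref{t3} and~\ref{t7}: analyse the filtration $\mbf$ generated by $N_t:=\int_0^tH_sJ_sdB_s$, show ${}^{o}X$ is a non-martingale local martingale, and then locate its jumps. The structural fact driving everything is that $J_s=0$ on $[0,\tau_1]\cup\bigcup_{i\ge1}(\tau_{2i},\tau_{2i+1}]$, so $N$ is \emph{constant} on each of those intervals, whereas on each $(\tau_{2i-1},\tau_{2i}]$ it is a nonconstant continuous local martingale with $\mbf$-adapted quadratic variation $\langle N\rangle_t=\int_0^tH_s^2J_s\,ds$. Consequently the ``switch-on'' times can be recovered from the path of $\langle N\rangle$ as hitting times, $\tau_{2i-1}=\inf\{t>\tau_{2i-2}:\langle N\rangle_t>\langle N\rangle_{\tau_{2i-2}}\}$ (with $\tau_0:=0$), and the ``switch-off'' times $\tau_{2i}$ as the left endpoints of the ensuing flat stretches of $\langle N\rangle$; inductively, both are $\mbf$-stopping times. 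Moreover, while $N$ is frozen no new $\mbf$-information arrives, so $\mcf_t$ and $\mcf_{\tau_{2i}}$ agree up to null sets on $\{\tau_{2i}\le t<\tau_{2i+1}\}$, which is exactly the statement that $\mbf$ jumps from $\tau_{2i}$ to $\tau_{2i+1}$ in the sense of Definition~\ref{d1}; the same argument with the index shifted shows $\mbf$ jumps from $\tau_{2i-2}$ to $\tau_{2i-1}$.

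Given this, total inaccessibility of $\tau_{2i-1}$ follows exactly as in Proposition~\ref{t2}: the flat stretch $[\tau_{2i-2},\tau_{2i-1})$ is nondegenerate because the $\tau_j$ are strictly increasing (the analogue of ``isolated from below''), and the continuous-distribution hypothesis on $\tau_{2i-1}$ — read as a continuous regular conditional distribution of $\tau_{2i-1}$ given $\mcf_{\tau_{2i-2}}$ — makes the compensator of $1_{\{t\ge\tau_{2i-1}\}}$, which by the jump property has the form~\eqref{e3} with $S=\tau_{2i-2}$, continuous. Hence $\tau_{2i-1}$ is totally inaccessible, which is the first assertion of the theorem.

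For the process $M:={}^{o}X$, I would first obtain the local-martingale property from Theorem~11 of~\cite{FP}: the subsequence $(\tau_{i_n})$ with $|B_{t\wedge\tau_{i_n}}|\le n$ consists of $\mbf$-stopping times (each $\tau_{i_n}$ being some $\tau_j$) increasing to $\infty$, and arguing as in the proof of Theorem~\ref{t2bis} — a Girsanov change of measure turning $X$ into the Doss process $h^{-1}(B+h(X_0))$, which is bounded whenever $B$ is — one sees that $X_{t\wedge\tau_{i_n}}$ is bounded, so $(\tau_{i_n})$ reduces $X$ in $\mbg$; thus $M$ is an $\mbf$ local martingale reduced by $(\tau_{i_n})$ (this step, like Theorem~\ref{t2bis}, uses the regularity $\gs\in\mcc^2$, $\gs>0$, $\gs$ and $\gs\gs'$ Lipschitz, which I take to be in force here as in Theorem~\ref{t7}). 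It is a \emph{strict} local martingale because $\mbe(M_t)=\mbe(X_t)$ and, by~\eqref{e6}--\eqref{e7}, $X$ is a nonnegative strict local martingale with strictly decreasing expectation, so $\mbe(M_t)$ is nonconstant and $M$ cannot be a martingale. Finally, since $\mbf$ jumps from $\tau_{2i-2}$ to $\tau_{2i-1}$ we have $\gD M_{\tau_{2i-1}}=M_{\tau_{2i-1}}-M_{\tau_{2i-1}-}$, and by the second paragraph the $\tau_{2i-1}$ are precisely the totally inaccessible times of $\mbf$, so every jump of $M$ occurs at one of them — the same conclusion, by the same route, as in Theorem~\ref{t3}.

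The step I expect to be the real obstacle — and the one the proof of Theorem~\ref{t3} passes over — is showing that $\gD M_{\tau_{2i-1}}$ is genuinely nonzero on a set of positive probability, rather than merely that $M$ \emph{may} jump at $\tau_{2i-1}$. This requires $X_{\tau_{2i-1}}$ to be nondegenerately correlated with $\tau_{2i-1}$ given $\mcf_{\tau_{2i-2}}$ — equivalently, that the conditional law of $X_{\tau_{2i-1}}$ given the post-jump field $\mcf_{\tau_{2i-1}}=\mcf_{\tau_{2i-2}}\vee\sigma(\tau_{2i-1})$ differs from the one obtained by conditioning only on $\{\tau_{2i-1}\ge t\}$ and letting $t\uparrow\tau_{2i-1}$. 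In the first-passage construction this entanglement is automatic, since $T_\gb$ and $X_{T_\gb}$ are both functionals of the same Brownian path, and the analogous entanglement is what must be assumed here. A secondary nuisance is the degenerate possibility that $H$ vanishes on part of an ``on'' interval, which would blur both the identification of the switch times as $\mbf$-stopping times and the counting of the flat stretches of $\langle N\rangle$; the assumption $H\not\equiv0$ is presumably meant to be read strongly enough (e.g.\ $H\ne0$ a.e.) to exclude this.
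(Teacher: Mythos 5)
Your proposal follows essentially the same route as the paper: recover the switch times from the intervals of constancy of $HJ\cdot B$ (equivalently of its quadratic variation $\int_0^t(H_sJ_s)^2\,ds$), deduce the jumping property of $\mbf$ and hence total inaccessibility of the $\tau_{2i-1}$, and obtain that $M={}^{o}X$ is a strict local martingale via Theorem~\ref{t2bis}/Doss together with the decreasing-expectation argument. The caveats you raise at the end --- nondegeneracy of the jumps $\gD M_{\tau_{2i-1}}$ and the possibility of $H$ vanishing on part of an ``on'' interval --- are genuine, and the paper's own proof passes over them in exactly the same way.
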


\begin{proof}
We know that $X$ is a strict local martingale, and that $M$ will be one, too, as soon as there exists a reducing sequence of stopping times in $\mbf$. However such a sequence exists as a consequence of Theorem~\ref{t2bis}, by the same argument using the theorem of Doss that was used in the proof of Theorem~\ref{t2bis}.

We know that the stochastic integral $HJ\cdot B$ has the same intervals of constancy as its quadratic variation process (see for example~\cite{PP}). But 
$$
[HJ\cdot B,HJ\cdot B]_t=\int_0^t(H_sJ_s)^2ds
$$
and this is constant on the stochastic intervals $(\tau_{2i},\tau_{2i+1}]$ for $i\geq 1$, and from this we can conclude that the filtration $\mbf$ jumps from $\mcf_{\tau_{2i}}$ to $\mcf_{\tau_{2i+1}}$, and from that it follows that each $\tau_{2i-1}$ is a totally inaccessible stopping time in $\mbf$. (Note that all the $\tau_i$ are predictable stopping times in $\mbg$.) The jump at $\tau_{2i+1}$ equals $\gD M_{\tau_{2i-1}}=M_{\tau_{2i+1}}-M_{\tau_{2i}}$ since the filtration itself jumps between those two stopping times. 
\end{proof}

\section{Connections to Mathematical Finance}

One thinks of a filtration as the collection of observable events evolving with time. In finance, it is natural to model different players as being more informed or less informed, and this can be modeled with filtration shrinkage and enlargement. In Fontana, Jeanblanc, and Song~\cite{FJS} which is concerned with filtration enlargement and insider trading, strict local martingales arise naturally as local martingale deflators. The idea of projecting a diffusion onto a subfiltration was used in Jarrow, Protter, and Sezer~\cite{JPSezer} in relation to reduced form credit risk models, and related in an abstract way to structural versus reduced form models in credit risk in Jarrow and Protter~\cite{BobPhilip}. Both enlargement and shrinkage are related to the preservation or loss of the absence of arbitrage opportunities of the models: see for example F\"ollmer and Protter~\cite{FP}, and Larsson~\cite{Martin}. Related to this is the concept of ``illusory arbitrage,'' also involving strict local martingales: see Jarrow and Protter~\cite{JarrowP}. One of the most commonly used applications of local martingales is their relation to mathematical models of financial bubbles, see for example~\cite{CH},\cite{JPShimbo},\cite{KKN},\cite{PhilipToBe}.

Until recently, models of mathematical bubbles (eg,~\cite{CH},\cite{JPShimbo}) were restricted to processes with continuous sample paths. However Kardaras, Kreher, and Nikeghbali~\cite{KKN} explicitly treat the more general case which includes c\`adl\`ag strict local martingales. They further use a Bessel (3) process and its reciprocal, the inverse Bessel process, to construct a strict local martingale with jumps. They do this by discretizing time in a non-random manner for the third component of the Bessel (3) process, and take an optional projection. Thus their approach is essentially the same as the one used here. Also, in the paper of Biagini, F\"ollmer, and Nedelcu~\cite{BFN}, which considers the birth of bubbles via a flow of changes of risk neutral measures, the general case with c\`adl\`ag strict local martingales is also treated. The interesting paper of H. Hulley~\cite{HH} deals with continuous path processes, but much of it makes sense for c\`adl\`ag path processes, and represents potential applications of strict local martingales with jumps in the theory of Mathematical Finance.

One can imagine the family of examples considered here arising when a trader or the market does not have all the observations $\mbg$ needed for the true underlying model (such as~\eqref{e5},\eqref{e6},\eqref{e7}), and instead has a smaller filtration $\mbf$. So what the trader sees is a projection of the underlying model onto the subfiltration. It is still a bubble in $\mbf$, and in this case the trader sees jumps, even if the underlying model is a continuous diffusion.

Another use of strict local martingales in Mathematical Finance is that of deflators that arises in stochastic portfolio theory, as can be seen in the works of Fernholz and Karatzas~\cite{FK}, Ruf~\cite{Ruf}, and the benchmark approach of Platen~\cite{Platen} and Platen and Heath~\cite{PH}. Indeed, for a probability measure $P$, let $M$ be a nonnegative uniformly integrable martingale without jumps to zero but that nevertheless reaches 0 with positive probability. Suppose $Q$ is given by $dQ=M_\infty dP$, so that $Q\ll P$ but $Q$ is not equivalent to $P$. Then under $Q$ the process $1/M$ is a strict local martingale. While $M$ can take on the value 0 and we are dividing by it, under $Q$ this happens only with $Q$-probability zero.  A nice analysis of this situation and how it can arise is presented in the recent work of Kreher and Nikeghbali~\cite{KN}.

\subsection{The Issues of the Pricing Grid and Transaction Times}

For stocks, prices in the United States markets are quoted in pennies. This means
that even if a price process is modeled as a continuous process (for example a
diffusion), it can be observed only at a grid of prices where grid points are separated
by pennies (ie, at most 1\textcent). This naturally creates a situation of filtration shrinkage, where
one observes the process only at the times it crosses the grid of prices separated
by penny units. This is, again, similar in spirit to the approach pioneered by A. Deniz
Sezer in~\cite{JPSezer} and~\cite{Deniz}. 

The other issue is that of transaction times. A common interpretation of models in
Mathematical Finance is that a price process evolves continuously, for example following
a diffusion. But one can observe the price process only at the random times when a transaction
takes place. Thus one observes the process at a well ordered sequence of stopping
times, the times when trades occur. It is typically assumed that nevertheless one
``knows" the price process at all times, especially so if the transaction times occur with
high frequency, a more common event in the modern era with the presence of high
frequency trading and ultra high frequency trading. However this is a small leap,
and it is more precise to model the information one has by the filtration obtained
by seeing the process only at the transaction times, a framework amenable to the
``family of examples" given in Paragraph~\ref{family}. Therefore these models of filtration
shrinkage enable one to model the connection between continuous time models and
``discrete time" models, via a filtration shrinkage corresponding to the transaction
times and the pricing grid crossing times. This helps to give a more precise meaning
to the ideas expressed in~\cite{BobPhilip}.

\end{document}